\numberwithin{equation}{section}
\theoremstyle{plain}
\newtheorem{thm}{Theorem}[section]
\newtheorem{cor}{Corollary}[section]
\newtheorem{lem}{Lemma}[section]
\newtheorem{prop}{Proposition}[section]
\theoremstyle{definition}
\newtheorem{defin}{Definition}[section]
\newtheorem{remark}{Remark}[section]
\newcommand{\I}{\mathcal I}
\newcommand{\R}{\mathbb R}
\newcommand{\Z}{\mathbb Z}
\newcommand{\p}{\mathbb P}
\newcommand{\E}{\mathbb E}
\newcommand{\T}{\mathbb T}
\newcommand{\eps}{\varepsilon}
\newcommand{\spn}{\mathrm{span}}
\newcommand{\dist}{\mathrm{dist}}
\newcommand{\rank}{\mathrm{rank}}
\begin{document}
\title{Inverse Littlewood-Offord problems for quasi-norms}

\author{Omer Friedland}
\address{Institut de Math\'ematiques de Jussieu, Universit\'e Pierre et Marie Curie \\ 4 Place Jussieu, 75005 Paris, France}
\email{omer.friedland@imj-prg.fr}

\author{Ohad Giladi}
\address{School of Mathematical and Physical Sciences, University of Newcastle \\  Callaghan, NSW 2308, Australia}
\email{ohad.giladi@newcastle.edu.au}

\author{Olivier Gu\'edon}
\address{Laboratoire d'Analyse et Math\'ematiques Appliqu\'ees, Universit\'e Paris-Est \\ 77454 Marne-la-Vall\'ee, France}
\email{olivier.guedon@u-pem.fr}

\begin{abstract}
Given a star-shaped domain $K\subseteq \R^d$, $n$ vectors $v_1,\dots,v_n \in \R^d$, a number $R>0$, and i.i.d. random variables $\eta_1,\dots,\eta_n$, we study the geometric and arithmetic structure of the set of  vectors $V = \{v_1,\dots,v_n\}$ under the assumption that the small ball probability
\begin{align*}
\sup_{x\in \R^d}~\p\Bigg(\sum_{j=1}^n\eta_jv_j\in x+RK\Bigg)
\end{align*}
does not decay too fast as $n\to \infty$. This generalises the case where $K$ is the Euclidean ball, which was previously studied in~\cite{NV11,TV12}.
\end{abstract}

\subjclass[2010]{60G50, 11K60, 46B06}

\date{\today}

\maketitle

\section{Introduction}
\subsection{Background}\label{sec background}
A body $K\subseteq \R^d$ is said to be a star-shaped domain if for every $x\in K$, $tx\in K$ for every $t\in [0,1]$. In this note, $K$ will always assumed to be compact. Given a random vector $X$ in $\R^d$ and $R>0$, define the small-ball probability
\begin{align}\label{def rho}
\rho_R^K(X) = \sup_{x\in \R^d}\p\big(X\in x+RK\big).
\end{align}
In particular, if $V= \{v_1,\dots,v_n\}\subseteq\R^d$ is a set of $n$ fixed vectors, $\eta_1,\dots,\eta_n$ are i.i.d. random variables, then one can consider the following random vector,
\begin{align}\label{def xv}
X_V = \sum_{j=1}^n\eta_jv_j.
\end{align}
It is known that the asymptotic behaviour of $\rho_R^{B_2^d}(X_V)$ as $n\to \infty$ is closely related to the various structural aspects of the set $V$. Here and in what follows, $B_2^d$ denotes the Euclidean ball in $\R^d$. We refer the reader to~\cite{Erd45, FF88, NV11, NVSurvey, RV08, TV09, TV10, TV12} to name just a few, where this type of questions is discussed, as well as some interesting applications. In particular, we refer the reader to~\cite{NV11}, which includes some enlightening remarks and examples of the relation between the behaviour of $\rho_R^{B_2^d}(X_V)$ and and the structure of $V$, as well as to~\cite{NVSurvey}, which gives a broad introduction to the topic.

In the results of~\cite{NV11, TV12}, one always assumes that the norm on $\R^d$ is the Euclidean norm. One of the key technical tools in the proofs is Esseen type estimates, which relate the small ball probability to the behaviour of the characteristic function of $X_V$. See for example~\cite{Ess66} and~\cite[Section~7.3]{TVBook}. Esseen's inequality for a general random vector $X$ says that for every $\eps>0$,
\begin{align}\label{esseen original}
\rho_R^{B_2^d}(X) \le C^d \left(\frac R{\sqrt d} + \frac{\sqrt d}{\eps}\right)^d\int_{\eps B_2^d}\big| \E \exp\left(i \langle X, \xi \rangle\right)\big|d\xi.
\end{align}
In~\eqref{esseen original} and in what follows, $C$ denotes an absolute constant. In~\cite{FGG14}, based on previous work from~\cite{FG11}, an Esseen type estimate was obtained for a general quasi-norm. If $K\subseteq \R^d$ is a centrally symmetric star-shaped domain in $\R^d$, then the functional 
\begin{align}\label{def norm k}
\|x\|_K = \inf\big\{t>0 ~\big|~ x \in tK\big\},
\end{align} 
is a quasi-norm, that is, $\|\cdot\|_K$ behaves like a norm, with the only exception that instead of the triangle inequality, there exists a number $C_K\ge 1$ such that for every $x,y\in \R^d$, 
\[\|x+y\|_K \le C_K\big(\|x\|_K+\|y\|_K\big).\] 
The case $C_K=1$ corresponds to the case when $\|\cdot\|_K$ is a norm and $K$ is convex. If we omit the assumption that $K$ is centrally symmetric then we do not have $\|x\|_K = \|-x\|_K$. In this note we do not need to assume that $K$ is centrally symmetric. The following Esseen type estimate was shown in \cite{FGG14}. 
\begin{align}\label{esseen k}
\rho_R^K(X) & \le \left(\kappa(K)R\right)^d\int_{\R^d}\big| \E \exp(i\langle X, \xi \rangle )\big|e^{-\frac{R^2|\xi|_2^2}{2}}d\xi = \kappa(K)^d\cdot \I\left(\frac 1 R \, X\right),
\end{align}
where we deonte
\begin{align*}
\kappa(K) = C_K\sqrt{\frac{2}{\pi}} \left(\frac{\mu_d(K)}{\gamma_d(K)}\right)^{1/d}, \quad \I(X) = \int_{\R^d}\big|\E\exp\big(i\langle X,\xi\rangle\big)\big|e^{-\frac{|\xi|_2^2}{2}}d\xi,
\end{align*}
$\gamma_d(K)$ being the $d$-dimensional gaussian measure of $K$, and $\mu_d(K)$ its Lebesgue measure. In particular, if $X=X_V$ as defined in~\eqref{def xv}, we have
\begin{eqnarray}\label{int v}
\nonumber \I(X_V) & = & \int_{\R^d}\left| \E \exp\left(i\left\langle \sum_{i=1}^n\eta_jv_j, \xi \right\rangle \right)\right|e^{-\frac{|\xi|_2^2}{2}}d\xi
\\ & \stackrel{(*)}{=} &\int_{\R^d}\left[\prod_{j=1}^n\Big| \E \exp\big(i\left\langle \eta_jv_j, \xi \right\rangle \big)\Big|\right]e^{-\frac{|\xi|_2^2}{2}}d\xi,
\end{eqnarray}
where in ($*$) we used the fact the $\eta_j$'s are independent. Inequality~\eqref{esseen k}, as well as inequality~\eqref{esseen original}, imply that there is a relation between the behaviour of $\rho_R^K(X_V)$ and the arithmetic behaviour of the vector $X_V$. Note also that~\eqref{int v} implies that it is natural to consider random variables $\eta_j$'s that satisfy some anti-concentration property. See Section~\ref{sec hyper} and Section~\ref{sec gap}, and in particular the anti-concentration conditions~\eqref{growth eta} and~\eqref{assume eta}. Therefore, given~\eqref{esseen k} and~\eqref{int v}, it is natural to consider the following type of problems, also known as \emph{Inverse Littlewood-Offord Problems}:

\medskip

\begin{center}
\emph{Assume that $\rho_R^K(X_V)$ is large. Show that the set $\{v_1,\dots,v_n\}\subseteq \R^d$ is well-structured.}
\end{center}

\medskip

Clearly, the term `large' should be formulated quantitatively, and the term `well-structured' can have different meanings. In this note, we discuss two ways to obtain `well-structured' sets. One way is to consider sets whose elements are all found near a given subspace of $\R^d$ (again, the term `near' can be made precise). In Section~\ref{sec hyper}, we show that if $\rho_R^K(X_V)$ does not decay too fast as $n\to \infty$, then many of the vectors in the set $\{v_1\,\dots,v_n\}\subseteq \R^d$ are `well-concentrated' around a given hyperplane. See Section~\ref{sec hyper} for the exact formulation. Then, in Section~\ref{sec gap}, we show that if $\rho_R^K(X_V)$ does not decay too fast, then the set $\{v_1,\dots,v_n\}\subseteq \R^d$ can be approximated with a set which has some arithmetic structure. See Section~\ref{sec gap} for the exact definitions and formulation. Finally, Section~\ref{sec proof hyper} and Section~\ref{sec proof gap} are dedicated to the proofs of the main theorems.

One point which is worth emphasising is the following. In the study of many asymptotic problems, including the ones discussed in~\cite{NV11, TV12}, one is primarily interested in the asymptotic behaviour as $n\to \infty$. In particular, since all norms in $\R^d$ are equivalent, any Euclidean result trivially yields a result for a general norm. For a quasi-norm, trivial bounds can also be easily obtained. For a quasi-norm, trivial bounds can also be deduced from the Euclidean results. The main purpose of this note is to obtain an estimate which is better than these trivial conclusions and to extend the results of~\cite{NV11, TV12} to a non-Euclidean setting. See Section~\ref{sec compare}, for a comparison of the previously obtained results with the results of this note.

\subsection*{Notations}
For a star-shaped body $K\subseteq \R^d$, we let $\|\cdot\|_K$ be defined as in~\eqref{def norm k}. In the special case of the $\ell_p^d$ norm, for $p\in (0,\infty]$ we denote
\begin{align}\label{def lp}
|x|_p = \|x\|_{B_p^d} = \left(\sum_{j=1}^d|x_j|^p\right)^{1/p}.
\end{align}
Note that if $p\ge 1$,~\eqref{def lp} gives a norm and for $p\le 1$,~\eqref{def lp} gives a quasi-norm with $C_{B_p^d} = 2^{1/p-1}$. 

For a set $S\subseteq \R^d$ and a vector $v\in \R^d$, denote
\begin{align*}
\mathrm{dist}_K(v,S) = \inf\big\{ \|x-s\|_K ~\big|~ s\in S\big\}.
\end{align*}
In particular, $\mathrm{dist}_2(v,S) = \mathrm{dist}_{B_2^d}(v,S)$, and $\mathrm{dist}_{\infty}(v,S) = \mathrm{dist}_{B_{\infty}^d}(v,S)$. 

Given a star-shaped domain $K\subseteq \R^d$ and $p\in (0,\infty]$, denote
\begin{align}\label{def omega}
\omega_p(K) = \inf\big\{t>0~\big|~ B_p^d\subseteq tK\big\},
\end{align}
and also
\begin{align}\label{def w}
\frac 1 {W_p(K)} = \sup\big\{t>0~\big|~ tK\subseteq B_p^d\big\}.
\end{align}
Note that since we have 
\begin{align*}
\frac 1 {\omega_p(K)}B_p^d \subseteq K \subseteq W_p(K) B_p^d,
\end{align*}
it follows that for every $x\in \R^d$,
\begin{align}\label{equiv norm}
\frac 1 {W_p(K)}|x|_p \le \|x\|_K \le \omega_p(K)|x|_p.
\end{align}

In this note, $C$ always denotes an absolute constant. If an implied constant depends on a parameter, say $\gamma$, we write $C(\gamma)$. Also, if $F$ is a finite set and $k$ is a positive integer, denote 
\[kF = \Big\{\sum_{j=1}^kv_j~\Big|~ v_j\in F\Big\}.\]
If $\alpha$ is a real number which is not an integer, then $\alpha F$ denotes the dilation of $F$, that is $\alpha F = \big\{\alpha \, x~\big|~x\in F\big\}$. $|F|$ denotes the cardinality of any finite set $F$. If $F$ is any set, for example, if $F$ is a star-shaped body, then $\mu_d(F)$ denotes its Lebesgue measure, while $\gamma_d(F)$ denotes its $d$-dimensional gaussian measure.

\section{Statement of the main results}
\subsection{Concentration near a hyperplane}\label{sec hyper}

The first result in this note shows that if the concentration function $\rho_R^K(X_V)$ is asymptotically large, then many vectors are necessarily close to a given hyperplane in $\R^d$. We begin by fixing some notation. For a real number $a$, let
\begin{align}\label{def norm}
\|a\|_{\T} = \inf_{z\in \Z} \big[\,|a-\pi z|\,\big],
\end{align}
where $\T=\R/\pi\Z$. As mentioned above, from \eqref{int v} it is natural to assume some bound on $\big| \E \exp\big(i\langle \eta_j v_j, \xi\rangle\big)\big|$. For the first theorem, we will use the following condition. There exists a number $c_{\eta}>0$ such that for every $a\in \R$, we have
\begin{align}\label{growth eta}
\big|\E\exp(i\eta a)\big| \le \exp\left(-c_{\eta}\|a\|_{\T}^2\right).
\end{align}
Condition \eqref{growth eta} can be thought of as an anti-concentration assumption. Note that, for example, symmetric Bernoulli random variables satisfy \eqref{growth eta}, since in this case we have
\begin{align*}
\big|\E\exp\big(i\eta a\big)\big| = |\cos(a)| \le 1-\frac 2 {\pi^2}\|a\|_{\T}^2 \le \exp\left(-\frac 2{\pi^2}\|a\|_{\T}^2\right).
\end{align*}
The main tool in the proof of Theorem~\ref{thm hyper} is the following proposition.

\begin{prop}\label{prop hyper}
Let $k\le n$ be integers. Let $V = \{v_1,\dots,v_n\} \subseteq \R^d$ be a set of fixed vectors. Assume that $\eta_1,\dots,\eta_n$ are i.i.d. random variables that satisfy \eqref{growth eta}. Assume also that for every hyperplane $H\subseteq \R^d$, there exists at least $n-k$ vectors satisfying $\mathrm{dist}_2(v_j,H) \ge R$. Then
\begin{align}\label{bound on i}
\I(X_V) \le \left(80\,\frac{R+1}{R}\,\sqrt{\frac{d}{d+c_{\eta}k}}\,\right)^d.
\end{align}
\end{prop}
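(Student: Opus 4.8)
The plan is to estimate the integral $\I(X_V)$ directly via the product formula \eqref{int v} together with the anti-concentration bound \eqref{growth eta}. First I would write
\[
\I(X_V) = \int_{\R^d}\left[\prod_{j=1}^n\big|\E\exp(i\langle\eta_jv_j,\xi\rangle)\big|\right]e^{-|\xi|_2^2/2}\,d\xi
\le \int_{\R^d}\exp\left(-c_{\eta}\sum_{j=1}^n\|\langle v_j,\xi\rangle\|_{\T}^2\right)e^{-|\xi|_2^2/2}\,d\xi,
\]
using \eqref{growth eta} with $a=\langle v_j,\xi\rangle$. Now the geometric hypothesis must be fed in. For a fixed $\xi$, the hyperplane $H_\xi = \xi^\perp$ (through the origin, normal $\xi$) has $\dist_2(v_j,H_\xi) = |\langle v_j,\xi\rangle|/|\xi|_2$, so by assumption at least $n-k$ indices $j$ satisfy $|\langle v_j,\xi\rangle| \ge R|\xi|_2$. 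The key point is then a pointwise lower bound on $\sum_j \|\langle v_j,\xi\rangle\|_{\T}^2$: for those $j$ one has either $\|\langle v_j,\xi\rangle\|_{\T}^2$ comparable to $\min(R^2|\xi|_2^2, \text{const})$, or $\langle v_j,\xi\rangle$ is close to a nonzero multiple of $\pi$. This is the crux, and it is where I expect the main obstacle: a large value $|\langle v_j,\xi\rangle|\ge R|\xi|_2$ does not by itself force $\|\langle v_j,\xi\rangle\|_{\T}$ to be large, since the value could land near a multiple of $\pi$.

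To get around this I would split the domain of integration according to the size of $|\xi|_2$. On the region where $|\xi|_2$ is small — roughly $R|\xi|_2 \le \pi/2$, or more carefully $|\xi|_2 \le c/R$ for a suitable constant — the bound $|\langle v_j,\xi\rangle|\ge R|\xi|_2$ together with $|\langle v_j,\xi\rangle| \le$ (something of order $|\xi|_2$ times the relevant norm) is not automatic, so instead I would argue that one cannot have too many $v_j$ with $|\langle v_j,\xi\rangle|$ enormous unless $\xi$ itself is large; the clean statement is that on $\{|\xi|_2\le \pi/(2R)\}$ we have $\|\langle v_j,\xi\rangle\|_{\T} \ge \frac{2}{\pi}|\langle v_j,\xi\rangle|$ whenever $|\langle v_j,\xi\rangle|\le \pi/2$, but since we only know $|\langle v_j,\xi\rangle|\ge R|\xi|_2$ and not an upper bound, this needs the further observation that for the integral it suffices to lower bound $\sum_j\|\langle v_j,\xi\rangle\|_{\T}^2$ by $c_{\eta}$ times a fixed quantity on a large-measure set. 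Concretely, I would aim for the pointwise inequality
\[
\sum_{j=1}^n\|\langle v_j,\xi\rangle\|_{\T}^2 \ \ge\ c\,(n-k)\,\min\!\big(R^2|\xi|_2^2,\,1\big)
\]
on a co-null-in-direction set of $\xi$ (or after a harmless reduction), by noting that for each of the $n-k$ good indices, either $\|\langle v_j,\xi\rangle\|_{\T}\gtrsim R|\xi|_2$ directly, or $\langle v_j,\xi\rangle$ lies within distance $\pi/2$ of some multiple of $\pi$, in which case $\|\langle v_j,\xi\rangle\|_{\T}$ is bounded below by a constant times how far $\xi$ is from the corresponding lattice-type constraint — and one shows the set of $\xi$ where this fails for too many $j$ has small Gaussian measure.

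Granting such a bound, the endgame is a Gaussian integral computation. We get
\[
\I(X_V) \ \lesssim\ \int_{\R^d}\exp\left(-c\,c_{\eta}(n-k)\min(R^2|\xi|_2^2,1)\right)e^{-|\xi|_2^2/2}\,d\xi,
\]
and splitting into $R|\xi|_2\le 1$ and $R|\xi|_2 \ge 1$: on the first piece the exponent is $-(c\,c_{\eta}(n-k)R^2/2 + 1/2)|\xi|_2^2$ up to constants wait — more precisely one bounds $\min(R^2|\xi|_2^2,1)\ge \tfrac12 R^2|\xi|_2^2\wedge\tfrac12$ and reduces to a Gaussian with covariance parameter of order $1 + c_{\eta}(n-k)R^2/d$ versus $d$, giving a factor $\big(d/(d + c\,c_{\eta}(n-k)R^2)\big)^{d/2}$ wait, but the target \eqref{bound on i} has $d + c_{\eta}k$ in the denominator, not $n-k$. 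So I should be careful: the hypothesis "$n-k$ vectors far from every $H$" is what is assumed, but presumably one re-indexes so that it is $k$ of them that are used, or the statement intends $k$ to play the role of the good count; I would match the normalization in \eqref{bound on i} by carrying through the count of far vectors (call it $m\ge n-k$, but only $k$ are needed, so take exactly $k$ of them if $k \le n-k$, else all) and absorbing the $R+1$ over $R$ factor from the $\min$-splitting and the $80^d$ from the accumulated absolute constants in the Gaussian integral estimates. The main obstacle, to repeat, is the pointwise/measure-theoretic lemma controlling $\sum_j\|\langle v_j,\xi\rangle\|_{\T}^2$ from below using only the Euclidean distance-to-hyperplane hypothesis; once that is in hand the rest is bookkeeping of constants in elementary Gaussian integrals.
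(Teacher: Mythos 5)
Your opening reduction (the product formula together with \eqref{growth eta}) is the same as the paper's, but the heart of your plan --- a pointwise lower bound $\sum_{j}\|\langle v_j,\xi\rangle\|_{\T}^2\gtrsim (n-k)\min\bigl(R^2|\xi|_2^2,1\bigr)$ valid off a ``bad set'' of small Gaussian measure --- has a genuine gap, one you name yourself but do not resolve. The hyperplane hypothesis does not prevent all the large inner products from landing on $\pi\Z$ simultaneously, and the set of $\xi$ where this nearly happens carries Gaussian mass of the same order as the bound you are trying to prove, not a negligible one. For example, in $d=1$ with $v_j=\pi$ for all $j$ and $R=1$ the hypothesis holds, yet near every integer $\xi$ the sum $\sum_j\|\pi\xi\|_{\T}^2$ is small on a window of length about $n^{-1/2}$, and the total Gaussian mass of these resonant windows is of order $n^{-1/2}$ --- exactly the size of \eqref{bound on i}. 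So the resonant set cannot be discarded as an error term: integrating over it, using the quadratic growth of $\|\cdot\|_{\T}^2$ near each resonance and summing over resonances, \emph{is} the proof, and your sketch supplies no mechanism for this, nor for controlling how the resonant sets of different $v_j$ overlap as $\xi$ varies; the single hyperplane $\xi^{\perp}$ attached to each fixed $\xi$ gives no such control, which is why one needs vectors spread over $d$ genuinely independent directions.

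The paper never lower-bounds the torus norm pointwise. Instead it uses the hypothesis to extract iteratively, via H\"older's inequality applied with multiplicity $\ell\approx k/d$, vectors $w_1,\dots,w_d$ with $\dist_2\bigl(w_j,\spn\{w_1,\dots,w_{j-1}\}\bigr)\ge R$, bounding all remaining factors trivially by $1$; it then integrates in a Gram--Schmidt orthonormal basis adapted to the $w_j$, by Fubini and induction on $d$, the one-dimensional step being Lemma~\ref{lem TV}, which bounds $\int_{\R}\exp\bigl(-\lambda\|\xi w+\alpha\|_{\T}^2-\xi^2/2\bigr)d\xi$ by $40(|w|+1)/\bigl(|w|\sqrt{1+\lambda}\bigr)$ precisely by summing over the roughly $|w|$ periods per unit interval; this is where both the factor $(R+1)/R$ and the factor $\sqrt{1+c_{\eta}\ell}$ originate. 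This structure also answers the $k$ versus $n-k$ mismatch that puzzled you at the end: the hypothesis supplies $n-k$ far vectors, but the argument deliberately spends only $k$ of them ($d$ directions, each with H\"older weight $\ell=k/d$), which is why $d+c_{\eta}k$ appears in the conclusion; no ``re-indexing'' is involved. As it stands, your proposal defers the entire difficulty to an unproved measure-theoretic lemma which, in the pointwise-off-a-negligible-set form you state, is false in the regime that matters.
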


The main result of this section is the following.

\begin{thm}\label{thm hyper}
Let $V = \{v_1,\dots,v_n\}\subseteq \R^d$ and $\eta_1,\dots, \eta_n$ be i.i.d. random variables satisfying \eqref{growth eta}. Assume that there exists $k \le n$ such that
\begin{align*}
\rho_R^K(X_V) \ge  \big(40\kappa(K))^d\left(\frac{d}{d+c_{\eta}k}\right)^{d/2}.
\end{align*}
Then there exists a hyperplane $H$ in $\R^d$ for which at least $n-k$ vectors from $V$ satisfy
\begin{align*}
\mathrm{dist}_2(v_j, H) \le R.
\end{align*}
In particular, using~\eqref{equiv norm}, we have 
\begin{align}\label{close in k}
\mathrm{dist}_K(v_j, H) \le \omega_2(K)R.
\end{align}
\end{thm}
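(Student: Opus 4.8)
The plan is to derive Theorem~\ref{thm hyper} from Proposition~\ref{prop hyper} by a contrapositive argument, combined with the Esseen-type estimate \eqref{esseen k}. First I would assume, for contradiction, that the conclusion fails: that is, for \emph{every} hyperplane $H\subseteq\R^d$ there are \emph{fewer} than $n-k$ vectors $v_j$ with $\dist_2(v_j,H)\le R$. Equivalently, for every hyperplane $H$ there are at least $n-k$ vectors with $\dist_2(v_j,H)>R$. Strictly speaking Proposition~\ref{prop hyper} is stated with $\dist_2(v_j,H)\ge R$, so I would first note that the strict inequality is stronger, or alternatively replace $R$ by a slightly smaller $R'<R$ and pass to a limit; either way the hypothesis of Proposition~\ref{prop hyper} is met.

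Applying Proposition~\ref{prop hyper} then gives
\begin{align*}
\I(X_V) \le \left(80\,\frac{R+1}{R}\,\sqrt{\frac{d}{d+c_\eta k}}\,\right)^d.
\end{align*}
Now I would invoke the Esseen-type bound \eqref{esseen k}, which reads $\rho_R^K(X_V)\le\kappa(K)^d\,\I\!\left(\tfrac1R X_V\right)$. The only subtlety is that Proposition~\ref{prop hyper} bounds $\I(X_V)$, whereas Esseen produces $\I(\tfrac1R X_V)$; but $\tfrac1R X_V = \sum_j \eta_j (v_j/R)$ is again of the form $X_{V'}$ with $V' = \{v_j/R\}$, and the rescaled hyperplane-distance condition $\dist_2(v_j,H)\ge R$ becomes $\dist_2(v_j/R, H/R)\ge 1$. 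So I would simply apply Proposition~\ref{prop hyper} directly to the rescaled vectors with ``$R=1$'', obtaining $\I(\tfrac1R X_V)\le \bigl(80\cdot 2\cdot\sqrt{d/(d+c_\eta k)}\bigr)^d = \bigl(160\sqrt{d/(d+c_\eta k)}\bigr)^d$. Hmm — that yields the constant $160\kappa(K)$ rather than $40\kappa(K)$, so to recover the sharper constant in the statement I would instead keep the original scaling: apply Esseen to get $\rho_R^K(X_V)\le\kappa(K)^d\,\I(\tfrac1R X_V)$ and apply Proposition~\ref{prop hyper} to the vectors $\{v_j\}$ with the distance threshold at $R$ but computing $\I$ of $\tfrac1R X_V$, which is $\I(X_{\{v_j/R\}})$ where $\{v_j/R\}$ is $R$-separated from every rescaled hyperplane iff $\{v_j\}$ is — i.e.\ run Proposition~\ref{prop hyper} with its ``$R$'' set to $1$ on the rescaled vectors, and the factor $\tfrac{R+1}{R}$ in \eqref{bound on i} is actually the place where the original $R$ re-enters once one tracks the rescaling carefully. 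The cleanest route is: $\rho_R^K(X_V)\le\kappa(K)^d\,\I(\tfrac1RX_V)\le\kappa(K)^d\bigl(80\cdot 2\cdot\sqrt{d/(d+c_\eta k)}\bigr)^d$, contradicting the hypothesis provided $160\kappa(K) \le 40\kappa(K)$ — which is false, so I would re-read how \eqref{esseen k} and Proposition~\ref{prop hyper} are meant to combine, presumably the factor $(R+1)/R$ already absorbs the rescaling so that the bound on $\I(\tfrac1R X_V)$ is $\bigl(80\,\tfrac{1+1/R}{1/R}\cdots\bigr)$...

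In any case, the structure is: \emph{(i)} negate the conclusion to obtain the hypothesis of Proposition~\ref{prop hyper} (up to the harmless strict/non-strict issue); \emph{(ii)} bound $\I(X_V)$ — or rather $\I(\tfrac1R X_V)$, by applying the proposition to a rescaled vector system — via \eqref{bound on i}; \emph{(iii)} feed this into Esseen's inequality \eqref{esseen k} to bound $\rho_R^K(X_V)$; \emph{(iv)} observe that the resulting bound contradicts the assumed lower bound $\rho_R^K(X_V)\ge(40\kappa(K))^d(d/(d+c_\eta k))^{d/2}$, once the numerical constants are matched. The last sentence of the theorem, inequality \eqref{close in k}, is then immediate from \eqref{equiv norm}: $\dist_K(v_j,H)\le\omega_2(K)\dist_2(v_j,H)\le\omega_2(K)R$. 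The main obstacle I anticipate is purely bookkeeping rather than conceptual: carefully tracking the dilation by $1/R$ through both \eqref{esseen k} and \eqref{bound on i} so that the constant $80\cdot 2 = 160$ from a naive application collapses to the advertised $40$ — this presumably requires using the $e^{-R^2|\xi|_2^2/2}$ weight form of Esseen directly rather than the normalised $\I$, so that the $R$-dependence lands exactly in the $\tfrac{R+1}{R}$ factor, and then choosing the contradiction threshold accordingly.
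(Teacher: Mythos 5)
Your route is exactly the paper's: negate the conclusion, rescale to $V_R=\{v_1/R,\dots,v_n/R\}$, apply Proposition~\ref{prop hyper} with its ``$R$'' equal to $1$, and feed the resulting bound on $\I\big(\frac1R X_V\big)=\I(X_{V_R})$ into \eqref{esseen k}; the last line \eqref{close in k} is indeed immediate from \eqref{equiv norm}. The place where you got stuck, however, deserves a straight answer rather than the hope that the factor $\frac{R+1}{R}$ ``absorbs the rescaling'': it does not, and the impasse is an inconsistency in the paper's constants, not in your bookkeeping. The paper's proof of the theorem quietly uses the constant $40\frac{R+1}{R}$ that the \emph{proof} of Proposition~\ref{prop hyper} actually establishes (the $80$ in its statement is lossy), so at $R=1$ it obtains $\rho_R^K(X_V)\le(80\kappa(K))^d\big(\frac{d}{d+c_\eta k}\big)^{d/2}$ --- and even this does not contradict the assumed lower bound with $40\kappa(K)$; your $160\kappa(K)$, coming from the stated constant, is worse still. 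So no tracking of the dilation through the $e^{-R^2|\xi|_2^2/2}$ weight will collapse $160$ (or $80$) to $40$: the honest fix is to quote the proposition with the constant from its proof and to read the threshold in the theorem's hypothesis with that larger constant, i.e.\ the argument proves the theorem with $80\kappa(K)$ in place of $40\kappa(K)$.

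A second point you should not gloss over is the negation itself. ``Fewer than $n-k$ vectors with $\dist_2(v_j,H)\le R$ for every $H$'' is equivalent to ``at least $k+1$ vectors with $\dist_2(v_j,H)> R$ for every $H$,'' not to ``at least $n-k$'' such vectors, so the hypothesis of Proposition~\ref{prop hyper} as literally stated (at least $n-k$ far vectors) is not what the contrapositive hands you. The argument still closes because the greedy selection inside the proposition's proof only ever uses about $k$ far vectors: it extracts $d\ell\le k$ of them, having removed at most $k-\ell$ before each extraction, so ``at least $k$ vectors at distance $\ge R$ from every hyperplane'' (which $k+1>k$ supplies, with your strict-versus-nonstrict remark taking care of $>R$ versus $\ge R$) is exactly the hypothesis it needs. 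State it that way instead of asserting the false equivalence; with that and the constant corrected as above, your outline (i)--(iv) is the paper's proof.
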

Theorem~\ref{thm hyper} implies the following.

\begin{cor}
Let $A>0$ be a positive constant. Assume that for all $n$ sufficiently large, we have
\[\rho_R^K(X_V) \ge n^{-A}.\]
Then there exist at least $n-k$ vectors in $V$ satisfying 
\begin{align*}
\mathrm{dist}_K(v_j, H) \le \omega_2(K)R,
\end{align*}
and $k$ satisfies
\begin{align*}
k \le Cd\,\frac{\kappa(K)^2n^{2A/d}-1}{c_{\eta}}.
\end{align*}
\end{cor}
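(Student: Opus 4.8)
The plan is to deduce this directly from Theorem~\ref{thm hyper}; the only task is to select, for each large $n$, an admissible value of the parameter $k$. Since the hypothesis gives $\rho_R^K(X_V)\ge n^{-A}$, the hypothesis of Theorem~\ref{thm hyper} holds as soon as $k\le n$ and
\begin{align*}
\big(40\kappa(K)\big)^d\left(\frac{d}{d+c_{\eta}k}\right)^{d/2}\le n^{-A}.
\end{align*}
First I would raise both sides to the power $2/d$ and rearrange, which turns this into the equivalent requirement $c_{\eta}k\ge d\big(1600\,\kappa(K)^2 n^{2A/d}-1\big)$. I would then set $k=k(n)$ to be the smallest positive integer with
\begin{align*}
k\ge \frac{d\big(1600\,\kappa(K)^2 n^{2A/d}-1\big)}{c_{\eta}};
\end{align*}
since $A>0$ the right-hand side tends to $+\infty$ with $n$, so for $n$ large it is positive and this is well defined (and in particular $\kappa(K)^2 n^{2A/d}-1>0$, so the quantity appearing in the statement is meaningful).

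With this choice of $k$, and assuming $k\le n$, Theorem~\ref{thm hyper} directly provides a hyperplane $H\subseteq\R^d$ such that at least $n-k$ of the vectors $v_j$ satisfy $\dist_2(v_j,H)\le R$; then \eqref{equiv norm} upgrades this to $\dist_K(v_j,H)\le\omega_2(K)R$, which is the asserted conclusion. For the bound on $k$ I would invoke minimality: $k\le d\big(1600\,\kappa(K)^2 n^{2A/d}-1\big)/c_{\eta}+1$, and for $n$ sufficiently large the additive $1$ together with the numerical factor $1600$ can be absorbed into a single absolute constant $C$, yielding $k\le Cd\,\big(\kappa(K)^2 n^{2A/d}-1\big)/c_{\eta}$.

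The one place that needs a little care is the side condition $k\le n$ required to apply Theorem~\ref{thm hyper}: this amounts to comparing the exponent $2A/d$ with $1$, and the clean statement holds in the regime where $d\,\kappa(K)^2 n^{2A/d}/c_{\eta}=o(n)$ --- e.g.\ when $2A<d$ and $n$ is large --- while outside that regime $n-k$ is negative and the conclusion is vacuous, so nothing is lost. Apart from this elementary inversion, all of the analytic content lives in Theorem~\ref{thm hyper}, hence in Proposition~\ref{prop hyper} via the Esseen-type estimate~\eqref{esseen k}, so there is nothing further to prove.
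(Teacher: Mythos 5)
Your proposal is correct and is exactly the intended derivation: the paper states this corollary without proof as an immediate consequence of Theorem~\ref{thm hyper}, obtained by choosing the smallest admissible $k$ and inverting the inequality $\big(40\kappa(K)\big)^d\big(\tfrac{d}{d+c_{\eta}k}\big)^{d/2}\le n^{-A}$, precisely as you do. Your handling of the constant absorption and of the vacuous regime $k>n$ is fine (for fixed $d$, $\kappa(K)$, $c_{\eta}$ and $n$ large, both the factor $1600$ and the rounding $+1$ are absorbed into $C$), so there is nothing to add.
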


\begin{remark}
The Euclidean case of Theorem~\ref{thm hyper} is a key ingredient in the proof of the main theorem of~\cite{TV12}. More specifically, assuming that the $\eta_j$'s are Bernoulli random variables and defining
\begin{align}\label{defin p}
\mathcal P_R^{B_2^d}(n) = \sup\Big\{\rho_R^{B_2^d}(V)~\Big|~ V\subseteq \R^d,~|V|=n, |v|_2\ge 1 ~\forall v\in V\Big\},
\end{align}
the authors prove that
\begin{align}\label{sharp LO}
\mathcal P_R^{B_2^d}(n) = \big(1+o(1)\big)2^{-n}S(n,\lfloor R\rfloor +1),
\end{align}
where $S(n,m)$ is the sum of the $m$ largest binomial coefficients $n\choose \cdot$. Here the error term tends to 0 as $n\to \infty$. The authors also show that if $R$ is sufficiently close to an integer, then the error term in~\eqref{sharp LO} can be removed. This problem had previously been studies in the one-dimensional case in~\cite{Erd45} and in the multi-dimensional case in~\cite{FF88} (again with the Euclidean norm). Similarly to~\eqref{defin p}, one could define
\begin{align*}
\mathcal P_R^{K}(n) = \sup\Big\{\rho_R^{K}(V)~\Big|~ V\subseteq \R^d,~|V|=n, \|v\|_K\ge 1 ~\forall v\in V\Big\},
\end{align*}
and ask whether an estimate similar to~\eqref{sharp LO} could be obtained in some non-Euclidean setting. However, the proof of~\eqref{sharp LO} in~\cite{TV12} makes heavy use of the rotation invariance of the Euclidean norm, and therefore it is not clear how~\eqref{sharp LO} could be generalised.
\end{remark}

\subsection{Approximate arithmetic progression}\label{sec gap}
We begin with the following definition.
\begin{defin}[General arithmetic progression, GAP]\label{def gap}
A set $Q\subseteq \R^d$ is said to be general arithmetic progression (GAP), if there exist integers $r$, $L_1,\dots,L_r$ and vectors $g_1,\dots,g_r \in \R^d$ such that $Q$ can be written in the following way.
\[Q = \Bigg\{\sum_{j=1}^rx_jg_j ~\Bigg|~ x_j\in \Z, ~ |x_j|\le L_j, ~ j \le r\Bigg\}.\]
The number $r$ is said to be the rank of $Q$, and is deonted by $\rank(Q)$. $Q$ is said to be proper if we have
\[|Q| = \prod_{j=1}^r L_j.\]
Finally, the vectors $g_1,\dots,g_r\in \R^d$ are said to be generators of $Q$.
\end{defin}

\begin{remark}
For every GAP, we have $|Q| \le \prod_{j=1}^r L_j$. A GAP which is proper is a GAP in which no cancellation between the generators occurs.
\end{remark}

A set which is GAP clearly has an additive structure. Hence, in the context of Littlewood-Offord problem, one could expect that if $\rho_R^K(X_V)$ does not decay too fast as $n\to \infty$, then $V$ should have additive structure, which is given by Definition~\ref{def gap}. This problem has been studied in~\cite{NV11}. Here we consider the non-Euclidean setting.

As in Theorem~\ref{thm hyper}, we need some anti-concentration condition to assure that we get efficient bounds in \eqref{esseen k}. Here we use the following: that if $\eta_1, \eta_2$ are independent copies of a random variable $\eta$, then there exists a number $C_{\eta}>0$ such that
\begin{align}\label{assume eta}
\p\Big(1\le |\eta_1-\eta_2| \le C_{\eta}\Big) \ge \frac 1 2.
\end{align}
Note that Bernoulli variables satisfy \eqref{assume eta}, for example with $C_{\eta}=2$. We can now state the second main result of this note.

\begin{thm}\label{thm gap}
Fix absolute positive real numbers $A$ and $\eps$. Let $K$ be a star-shaped domain in $\R^d$, and let $\eta_1,\dots,\eta_n$ be i.i.d. random variables that satisfy~\eqref{assume eta}. Assume that
\[\rho^K_R(X_V) \ge n^{-A}.\] 
Let $n' \in [n^{\eps}, n]$ be a positive integer, and assume that $n$ is sufficiently large compared to $d$, $A$, $\eps$ and $\kappa(K)$. Then there exists a GAP $Q\subseteq \R^d$, a positive integer $k$ satisfying
\begin{align*}
\sqrt{\frac{n'}{640\pi^2\sqrt{d\log\left(n^A\kappa(K)\right)}}} \le k \le \sqrt{n'},
\end{align*}
and a number $\alpha $ which depends only on the constant $C_\eta$ from~\eqref{assume eta}, such that
\begin{enumerate}
\item\label{part 1} {\bf $Q$ has small rank and cardinality:}
\begin{align*}
\rank(Q) & \le C\left(d+\frac A{\eps}\right), 
\\ |Q| & \le C(A,d, \eps) \frac{(n')^{\frac{d-\rank(Q)}{2}}}{\rho_R^K(X_V)}.
\end{align*}
\item\label{part 2} {\bf $Q$ approximates $V$ in the $K$ quasi-norm:} At least $n-n'$ elements of $v\in V$ satisfy
\[\mathrm{dist}_{K}(v,Q) \le C(\eta)\frac{\omega_{\infty}(K)R}{dk}.\]
\item \label{part 3} {\bf $Q$ has full dimension:} There exists $C' \le Cd\alpha$ such that
\[\{-1,1\}^d \subseteq \frac {C'k} RQ.\] 
\item \label{part 4} {\bf The generators of $Q$ have bounded $K$ quasi-norm:}
\begin{align}\label{bound norm generator}
\max_{1\le j \le r}\|g_j\|_K \le C(A,d,\eps)\, C_K^{k+1}\left(\frac{d\alpha k}{R}\max_{v\in V}\|v\|_K+\omega_{\infty}(K)\right).
\end{align}
\end{enumerate}
\end{thm}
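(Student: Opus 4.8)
### Proof proposal

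The plan is to follow the now-standard strategy for inverse Littlewood--Offord theorems, due to Nguyen--Vu, adapted to the quasi-norm setting via the Esseen estimate~\eqref{esseen k}. The starting point is to apply~\eqref{esseen k} together with~\eqref{int v} to the hypothesis $\rho_R^K(X_V) \ge n^{-A}$. This gives a lower bound
\[
\int_{\R^d}\left[\prod_{j=1}^n\big|\E\exp(i\langle \eta_j v_j, \xi/R\rangle)\big|\right]e^{-|\xi|_2^2/2}\,d\xi \ \ge\ \frac{n^{-A}}{\kappa(K)^d}.
\]
Since $\eta_1,\dots,\eta_n$ are i.i.d. and satisfy~\eqref{assume eta}, I would symmetrize: using the independent copies trick, $\big|\E\exp(i\langle \eta_j v_j,\theta\rangle)\big|^2 = \E\exp(i\langle(\eta_1-\eta_2)v_j,\theta\rangle) = \E\cos(\langle(\eta_1-\eta_2)v_j,\theta\rangle)$, and then~\eqref{assume eta} lets me bound each factor by $1 - c\,\|\langle v_j,\theta\rangle\|_{\T}^2$-type quantities on the relevant range (this is where the constant $C_\eta$, hence $\alpha$, enters). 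The outcome is a lower bound on the Gaussian measure of the set of $\xi$ for which the ``nonzero mod $\pi$'' count $\sum_j \|\langle v_j,\xi/R\rangle\|_{\T}^2$ is small, i.e. on a level set that looks like $\{\xi : \#\{j : \langle v_j,\xi/(\pi R)\rangle \text{ is far from } \Z\} \le \text{small}\}$. Counting lattice points / using the Gaussian weight forces many of these $\xi$ to be well-approximated, simultaneously for most $j$, by vectors in a dual object.

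The combinatorial heart is then a counting/pigeonhole argument at scale $k\sim\sqrt{n'}$: partition the $n'$ (or rather the index set) and choose $k$ so that $k^2$ is comparable to $n'$ up to the $\sqrt{d\log(n^A\kappa(K))}$ factor, exactly as in the statement of the bound on $k$. Averaging over random subsets of size $k$ and applying the above anti-concentration estimate $k$-fold produces a set $Q'$ of ``$\pi R$-approximants'' of bounded cardinality; one shows $Q' \subseteq$ a GAP of controlled rank by the standard argument that a dense-enough symmetric set whose $k$-fold sumset is not much larger than itself lies in a GAP (Freiman-type, via the $d$-dimensional analogue used in~\cite{NV11}). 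The rank bound $r \le C(d+A/\eps)$ comes from balancing the cardinality bound $|Q|\lesssim (n')^{(d-r)/2}/\rho_R^K(X_V)$ against the requirement that the covering actually exhausts the measure, and item~\ref{part 3} (full dimension, $\{-1,1\}^d\subseteq \frac{C'k}{R}Q$) is extracted by noting that if $Q$ lived in a proper subspace then the integral~\eqref{int v} would not decay along that subspace's complement, contradicting the measure count — precisely the mechanism of Theorem~\ref{thm hyper}/Proposition~\ref{prop hyper}, which I would invoke directly.

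For item~\ref{part 2}, the approximation distance in the $K$ quasi-norm is obtained from the Euclidean approximation of the $v_j$'s by applying~\eqref{equiv norm}: a Euclidean $\ell_\infty$-type bound of order $R/(dk)$ on $\dist_\infty(v_j, Q)$ (which is what the dual lattice computation naturally yields, the $1/(dk)$ coming from the fineness forced by the $k$-fold convolution) converts to $\dist_K(v_j,Q) \le \omega_\infty(K)\cdot C(\eta) R/(dk)$. Item~\ref{part 4} is then bookkeeping: the generators $g_j$ of $Q$ can be taken among sums of at most $k+1$ of the approximating vectors (each within $C(\eta)\omega_\infty(K)R/(dk)$ of some $v\in V$), so the quasi-triangle inequality with constant $C_K$ iterated $k+1$ times gives the $C_K^{k+1}$ factor and the bound~\eqref{bound norm generator} in terms of $\max_{v\in V}\|v\|_K$ and $\omega_\infty(K)$.

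I expect the main obstacle to be the non-Euclidean bookkeeping in steps 2--4 rather than any new idea: every estimate that is ``rotation-invariant'' in the Euclidean arguments of~\cite{NV11} must be rerouted through~\eqref{equiv norm} and the quasi-triangle constant $C_K$, and one must be careful that the $C_K^{k+1}$ blow-up in item~\ref{part 4} does not feed back into the cardinality or rank bounds (it does not, because those depend only on the Euclidean/combinatorial structure, and $C_K$ enters only at the final translation back to $\|\cdot\|_K$). A secondary technical point is making the ``$n$ sufficiently large compared to $d,A,\eps,\kappa(K)$'' hypothesis do the work of absorbing lower-order terms so that the clean inequalities for $k$, $\rank(Q)$, and $|Q|$ hold as stated; this is routine but must be tracked to ensure the constant $C(A,d,\eps)$ in items~\ref{part 1} and~\ref{part 4} is genuinely independent of $n$ and of $K$.
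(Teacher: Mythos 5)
Your overall route is the paper's route: symmetrize via independent copies to get the $\|\cdot\|_{\eta}$-type bound in the Esseen integral, pass to a large level set of $\sum_{v}\|\langle v,\xi\rangle\|_{\T}^2$ and discretize it, work at scale $k\sim\sqrt{n'}$ (up to the $\sqrt{d\log(n^A\kappa(K))}$ factor), show the $k$-fold sumset of the (lattice-approximated, non-exceptional) vectors is small, and feed this into the Freiman-type inverse theorem of~\cite{NV11}; Parts~\ref{part 2} and the rescaling by $R/(Dk)$ then give the $K$-quasi-norm statements exactly as you indicate. However, there is a genuine gap in your treatment of Part~\ref{part 3}. You propose to deduce full dimensionality by invoking Theorem~\ref{thm hyper}/Proposition~\ref{prop hyper} (``if $Q$ lived in a proper subspace the integral would not decay''). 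That argument, even if it could be run (its hypothesis about hyperplane distances is not available here), would at best show that $Q$ is not contained in a hyperplane, which is strictly weaker than the quantitative containment $\{-1,1\}^d\subseteq \frac{C'k}{R}Q$ demanded by the theorem. In the paper this containment is not deduced at all but built in by construction: the inverse sumset theorem is applied to the set $F+\{-1,1\}^d$, i.e.\ the discrete cube is adjoined to the lattice approximants of $V_R'$ before extracting the GAP $Q'$, so $\{-1,1\}^d\subseteq Q'$ automatically, and $Q=\frac{R}{Dk}Q'$ with $D=512d\alpha$ yields $C'\le Cd\alpha$. Without adjoining the cube (or some equivalent device), your construction gives no control on a dilate of $Q$ containing $\{-1,1\}^d$.

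A secondary point: for Part~\ref{part 4} you assert that the generators of $Q$ ``can be taken among sums of at most $k+1$ of the approximating vectors'' as if this were bookkeeping. The inverse theorem by itself only gives $F+\{-1,1\}^d\subseteq Q'$ and bounds on $\rank(Q')$ and $|Q'|$; the statement that the generators are efficiently expressible in terms of the original set is the additional containment $kQ'\subseteq C(A,d,\eps)\bigl[k\bigl(F+\{-1,1\}^d\bigr)\bigr]$, which the paper imports from~\cite{NV11,SV06,Tao10}. You should cite (or prove) that input explicitly; once it is in place, your iteration of the quasi-triangle inequality giving the $C_K^{k+1}$ factor is correct, and your observation that $C_K$ does not feed back into the rank and cardinality bounds is also correct.
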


\begin{remark}
Note that when $K$ is not convex, that is, when $\|\cdot\|_K$ is a quasi-norm but not a norm, we have $C_K>1$, in which case~\eqref{bound norm generator} does not give a sublinear bound (in $n$) on the norm.
\end{remark}

\subsection{Comparing previous and new results}\label{sec compare}

As discussed in Section~\ref{sec background}, the main purpose of this note is to show that in some cases, one can obtain estimates which are better than estimates which are trivially obtained from using the results in the Euclidean setting. This is true for both Theorem~\ref{thm hyper} and for Theorem~\ref{thm gap}.  Recall that by~\eqref{def omega} and~\eqref{def w}, we have that
\begin{align}\label{inclusion}
\frac 1 {\omega_2(K)}B_2^d \subseteq K \subseteq W_2(K) B_2^d.
\end{align}
Now,~\eqref{inclusion} implies that 
\[\rho_R^K(X_V) \le \rho_{W_2(K)R}^{B_2^d}(X_V).\] 
If, in addition, we use the fact that $\|\cdot\|_K \stackrel{\eqref{equiv norm}}{\le} \omega_2(K)|\cdot|_2$, we can use the Euclidean version of Theorem~\ref{thm hyper} to conclude that
\begin{align}\label{using Euclidean}
d_K(v_j,H) \le \omega_2(K)W_2(K)R.
\end{align}
If, for example, we assume that $K$ is convex, that is, $\|\cdot\|_K$ is a norm, then by taking a linear transformation of $K$, we may assume that the Euclidean unit ball is the ellipsoid of maximal volume contained in $K$, in which case $B_2^d \subseteq K \subseteq \sqrt d B_2^d$, see for example~\cite{Bal97}. This implies that we have $\omega_2(K) = 1$ and $W_2(K) \le \sqrt d$. Thus, in general,~\eqref{using Euclidean} can be a worse bound than~\eqref{close in k}.

Similarly, by using the Euclidean version of Theorem~\ref{thm gap}, if we assume that $\rho_R^K(X_V) \ge n^{-A}$ then we have $\rho_{\frac{R}{W_2(K)}}^{B_2^d}(X_V) \ge n^{-A}$. Then using the Euclidean version of the theorem gives an approximating GAP, but in this case, by Part~\ref{part 2} and the fact that $\omega_{\infty}(B_2^d) = \sqrt d$, the Euclidean approximation is 
\begin{align*}
|v-q|_2 \le \frac{CW_2(K)R}{\sqrt d\, k},
\end{align*}
where $v\in V$ and $q\in Q$. Again, we have that $\|\cdot\|_K \stackrel{\eqref{equiv norm}}{\le} \omega_2(K)|\cdot|_2$, which means that the approximation in the $K$ norm is of order $\frac{\omega_2(K)W_2(K) R}{\sqrt d\, k}$. On the other hand, the approximation obtained from directly using Theorem~\ref{thm gap}, is of order $\frac{\omega_{\infty}(K)R}{dk}$. If again we assume that $K$ is in a position such that $B_2^d \subseteq K\subseteq \sqrt d B_2^d$, then we have that $\omega_2(K)W_2(K) \in \big[1,\sqrt d\,\big]$, while $\frac{\omega_{\infty}(K)}{\sqrt d} \le 1$. This means that the bound obtained in Part~\ref{part 2} of Theorem~\ref{thm gap} is generally better. Note however that if $K=B_2^d$ the two bounds coincide.

\begin{remark}
For every $t>0$, we have $\rho_R^{K}(V) = \rho_{R/t}^{tK}(V)$. Thus,~\eqref{esseen k} gives 
\begin{align*}
\rho_R^K(V) \le \inf_{t>0}\left[\kappa(tK)^d\cdot \mathcal I\left(\frac t R X_V\right)\right].
\end{align*}
Therefore, in order to find good bounds on $\rho_R^K(V)$, one possible approach would be to study the behaviour of $\kappa(tK)$, where $t>0$. Note that in the case $K$ is convex, that is, when $\|\cdot\|_{K}$ is a norm, the results of~\cite{CFM04} imply that $\kappa(tK) = \sqrt{\frac 2{\pi}}\, t\,e^{\varphi_K(t)}$, where $\varphi_K$ is convex. However, in general we do not seem to have enough information about $\varphi_K$ to obtain meaningful results. Also, it could be of interest to study bodies for which $\kappa(K)$ is a constant, that is, does not depend on $d$. By~\eqref{esseen k}, this would again yield good bounds on $\rho_R^K(V)$.
\end{remark}

\section{Proof of Proposition~\ref{prop hyper} and Theorem~\ref{thm hyper}}\label{sec proof hyper}

We begin with the following lemma, which is a simple variant of a result that appeared in \cite{TV12}.
\begin{lem}\label{lem TV}
Let $\lambda>0$ and $w\neq 0$. Then for every $\alpha \in \R$, we have
\begin{align*}
\int_{\R}\exp\left(-\lambda \|\xi w+\alpha\|_{\T}^2-\frac{|\xi|_2^2}{2}\right)d\xi \le \frac{40(|w|+1)}{|w|\,\sqrt {1+\lambda}}.
\end{align*}
\end{lem}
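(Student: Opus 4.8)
The plan is to exploit that $\xi\mapsto\|\xi w+\alpha\|_{\T}$ is periodic in $\xi$ with period $m:=\pi/|w|$, and to use the Gaussian weight $e^{-|\xi|_2^2/2}$ only to localise the contributions of the individual periods. We may assume $w>0$, since every quantity below depends on $w$ only through $|w|$ (the change of variables $s=\xi w$ merely reverses orientation when $w<0$).

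First I would tile the line as $\R=\bigcup_{k\in\Z}I_k$ with $I_k=[(k-\tfrac12)m,(k+\tfrac12)m)$, and bound the integral over each $I_k$ by pulling out the maximum of the Gaussian on that interval: on $I_k$ one has $e^{-|\xi|_2^2/2}\le e^{-\dist(0,I_k)^2/2}$, where $\dist(0,I_0)=0$ and $\dist(0,I_k)=(|k|-\tfrac12)m$ for $k\neq0$.

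Second, I would estimate the single-period integral $J_k:=\int_{I_k}\exp(-\lambda\|\xi w+\alpha\|_{\T}^2)\,d\xi$ in two ways. Trivially $J_k\le m=\pi/|w|$, since the integrand is at most $1$. On the other hand, substituting $s=\xi w$ and using that $s\mapsto\|s\|_{\T}$ has period $\pi$, one gets $J_k=\tfrac1{|w|}\int_0^\pi e^{-\lambda\|s\|_{\T}^2}\,ds=\tfrac2{|w|}\int_0^{\pi/2}e^{-\lambda t^2}\,dt\le\dfrac{\sqrt\pi}{|w|\sqrt\lambda}$. Using the first bound when $\lambda\le1$ (together with $\sqrt{1+\lambda}\le\sqrt2$) and the second when $\lambda\ge1$ (together with $\sqrt{1+\lambda}\le\sqrt{2\lambda}$) gives a single $\lambda$-uniform estimate $J_k\le\dfrac{\pi\sqrt2}{|w|\sqrt{1+\lambda}}$, valid for every $\lambda>0$ and every $k$.

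Finally I would sum over $k$. What remains is $\sum_{k\in\Z}e^{-\dist(0,I_k)^2/2}=1+2\sum_{k\ge1}e^{-((k-1/2)m)^2/2}$; bounding the $k=1$ term by $1$ and comparing the tail with $\int_{1/2}^\infty e^{-(um)^2/2}\,du=\tfrac1m\int_{m/2}^\infty e^{-v^2/2}\,dv\le\tfrac1m\sqrt{\pi/2}$ yields $\sum_{k\in\Z}e^{-\dist(0,I_k)^2/2}\le 3+\sqrt{2\pi}/m=3+|w|\sqrt{2/\pi}$. Multiplying through, the left-hand side of the lemma is at most $\dfrac{\pi\sqrt2}{|w|\sqrt{1+\lambda}}\bigl(3+|w|\sqrt{2/\pi}\bigr)=\dfrac{3\pi\sqrt2+2\sqrt\pi\,|w|}{|w|\sqrt{1+\lambda}}$, and since $3\pi\sqrt2<40$ and $2\sqrt\pi<40$, this is at most $\dfrac{40(|w|+1)}{|w|\sqrt{1+\lambda}}$, as claimed. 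The only point needing a little care is the interpolation in the second step, arranged so that the bound degrades like $(1+\lambda)^{-1/2}$ rather than $\lambda^{-1/2}$ (the latter would be false as $\lambda\to0$); everything else is a routine integral comparison, and the constant $40$ is deliberately wasteful.
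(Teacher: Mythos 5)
Your proof is correct, and while it rests on the same two ingredients as the paper's argument --- periodicity of $\xi\mapsto\|\xi w+\alpha\|_{\T}$ together with the one-period estimate that interpolates between the trivial bound and an $O(\lambda^{-1/2})$ bound to produce the $(1+\lambda)^{-1/2}$ decay --- your decomposition of the line is genuinely different. The paper partitions $\R$ into the fixed intervals $[\pi s,\pi(s+1)]$, extracts the factor $(|w|+1)/|w|$ by counting at most $\lfloor |w|\rfloor+1$ periods inside each such window after the substitution $t=\xi w+\alpha$, and then pays only an absolute constant for the Gaussian sum $\sum_{s}e^{-\pi^2s^2/2}$. You instead tile by exact periods $I_k$ of length $\pi/|w|$, so that each tile contributes the clean bound $J_k\le \pi\sqrt2/(|w|\sqrt{1+\lambda})$, uniform in $k$ and $\alpha$, and the $(|w|+1)$-type growth emerges from the Gaussian sum over tiles, $3+|w|\sqrt{2/\pi}$. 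The two routes are logically equivalent, but yours has two small advantages: it yields a better constant (roughly $3\pi\sqrt2+2\sqrt\pi\,|w|$ in the numerator, comfortably below $40(|w|+1)$), and, by majorising the Gaussian on each tile by its genuine maximum $e^{-\dist(0,I_k)^2/2}$, it sidesteps the minor index shift one has to address in the paper's summation step, where $e^{-\xi^2/2}$ is bounded by $e^{-\pi^2 s^2/2}$ on $[\pi s,\pi(s+1)]$ also for negative $s$ (there the correct majorant is $e^{-\pi^2(|s|-1)^2/2}$; harmless, since the constants absorb it, but it deserves a word). I checked the interpolation giving $J_k\le\pi\sqrt2/(|w|\sqrt{1+\lambda})$, the tail comparison $\sum_{k\ge2}e^{-((k-1/2)m)^2/2}\le\frac1m\sqrt{\pi/2}$, and the final arithmetic; all are correct, as is the reduction to $w>0$.
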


\begin{proof}
Since by definition~\eqref{def norm}, for every real number $w$ we have $\|w\|_{\T} = \|-w\|_{\T}$, we may assume without loss of generality that $w>0$. Using the change of variables $t = \xi w+\alpha$ and the fact that $w >0$, we get
\begin{align}\label{what we want with w}
\int_0^{\pi}\exp\left(-\lambda \|\xi w+\alpha\|_{\T}^2\right)d\xi & = \frac 1 {w}\int_{0 \le t-\alpha \le \pi w}\exp\left(-\lambda \|t\|_{\T}^2\right)dt.
\end{align}
Let $N = \lfloor w\rfloor+1$. Then $w \le N\le w+1$, and so we have
\begin{align}\label{split int}
\nonumber & \int_{0\le t-\alpha \le \pi w }\exp\left(-\lambda \|t\|_{\T}^2\right)dt \le \int_{0\le t-\alpha \le \pi N}\exp\left(-\lambda\|t\|_{\T}^2\right)dt
\\
\nonumber& = \sum_{s=0}^{N-1}\int_{\pi s+\alpha}^{\pi(s+1)+\alpha}\exp\left(-\lambda \|t\|_{\T}^2\right)dt \le N\max_{0\le s\le N-1}\left[\int_{\pi s+\alpha}^{\pi(s+1)+\alpha}\exp\left(-\lambda\|t\|_{\T}^2\right)dt\right]
\\
& \le (w +1\big)\max_{0\le s\le N-1}\left[\int_{\pi s+\alpha}^{\pi(s+1)+\alpha}\exp\left(-\lambda \|t\|_{\T}^2\right)dt\right].
\end{align}
Plugging \eqref{split int} into \eqref{what we want with w},
\begin{align}\label{bound with max}
\int_0^{\pi}\exp\left(-\lambda \|\xi w+\alpha\|_{\T}^2\right)d\xi \le  \frac{w +1}w\max_{0\le s\le N-1}\left[\int_{\pi s+\alpha}^{\pi(s+1)+\alpha}\exp\left(-\lambda \|t\|_{\T}^2\right)dt\right].
\end{align}
Consider first the integral
\begin{align*}
\int_0^{\pi}\exp\left(-\lambda\|t\|_{\T}^2\right)dt.
\end{align*}
This integral is trivially bounded by $\pi$. Also,
\begin{align*}
\int_{0}^{\pi}\exp\left(-\lambda \|t\|_{\T}^2\right)dt & = \int_{0}^{\pi/2}\exp\left(-\lambda \|t\|_{\T}^2\right)dt+\int_{\pi/2}^{\pi}\exp\left(-\lambda \|t\|_{\T}^2\right)dt
\\
& = \int_{0}^{\pi/2}\exp\left(-\lambda t^2\right)dt + \int_{\pi/2}^{\pi}\exp\left(-\lambda |t-\pi|^2\right)dt
\\
& = 2\int_{0}^{\pi/2}\exp\left(-\lambda t^2\right)dt
= \frac{2}{\sqrt{\lambda}}\int_{0}^{\frac{\lambda\pi^2}{4}}\frac{e^{-x}}{\sqrt x}dx \le \frac{6}{\sqrt{\lambda}}.
\end{align*}
Altogether, we get
\begin{align}\label{bound zero pi}
\int_{0}^{\pi}\exp\left(-\lambda \|t\|_{\T}^2\right)dt \le \min\left\{\pi,\frac{6}{\sqrt{\lambda}}\right\} \le \frac{10}{\sqrt{1+\lambda}}.
\end{align}
Since the function $\|\cdot\|_{\T}^2$ is $\pi$-periodic, it follows that for every $\alpha \in \R$,
\begin{align}\label{bound with s}
\nonumber \int_{\pi s+\alpha}^{\pi(s+1)+\alpha}\exp\left(-\lambda \|t\|_{\T}^2\right)dt & = \int_{\alpha}^{\pi+\alpha}\exp\left(-\lambda \|t\|_{\T}^2\right)dt  \le \int_{0}^{2\pi}\exp\left(-\lambda \|t\|_{\T}^2\right)dt
\\ & = 2\int_{0}^{\pi}\exp\left(-\lambda \|t\|_{\T}^2\right)dt \stackrel{\eqref{bound zero pi}}{\le} \frac{20}{\sqrt{1+\lambda}}.
\end{align}
Plugging \eqref{bound with s} into \eqref{bound with max}, we get
\begin{align*}
\int_0^{\pi}\exp\left(-\lambda \|\xi w+\alpha\|_{\T}^2\right)d\xi \le \frac{20\big(w +1\big)}{w\,\sqrt {1+\lambda}}.
\end{align*}
Since $\|\cdot\|_{\T}$ is $\pi$-periodic, we also have for every $s\in \Z$,
\begin{align*}
\int_{\pi s}^{\pi(s+1)}\exp\left(-\lambda \|\xi w+\alpha\|_{\T}^2\right)d\xi \le \frac{20(w +1\big)}{w\,\sqrt {1+\lambda}}.
\end{align*}
Hence,
\begin{align*}
& \int_{\R} \exp\left(-\lambda \|\xi w+\alpha\|_{\T}^2-\frac{|\xi|_2^2}{2}\right)d\xi = \sum_{s=-\infty}^{\infty}\int_{\pi s}^{\pi(s+1)}\exp\left(-\lambda \|\xi w+\alpha\|_{\T}^2-\frac{|\xi|_2^2}{2}\right)d\xi
\\ & \le \sum_{s=-\infty}^{\infty}e^{-\frac{\pi^2 s^2}2}\int_{\pi s}^{\pi(s+1)}\exp\left(-\lambda \|\xi w+\alpha\|_{\T}^2\right)d\xi \le \left(\sum_{s=-\infty}^{\infty}e^{-\frac{\pi^2s^2}{2}}\right)\frac{20( w +1\big)}{w\,\sqrt {1+\lambda}}
\\ & \le \frac{40(w +1)}{w\,\sqrt {1+\lambda}},
\end{align*}
which completes the proof.
\end{proof}

We are now in a position to prove Proposition~\ref{prop hyper}.

\begin{proof}[Proof of Proposition~\ref{prop hyper}]
By~\eqref{int v} and~\eqref{growth eta}, we have
\begin{eqnarray}\label{from esseen}
\I\left(X_V\right) \le \int_{\R^d}\exp\Bigg(-c_{\eta}\sum_{j=1}^n \|\langle v_j, \xi\rangle\|_{\T}^2-\frac{|\xi|_2^2}{2}\Bigg)d\xi.
\end{eqnarray}
Assume first that $k = d\ell$, $\ell\in \mathbb N$. The general case will be considered at the end of the proof. Let $v_{0, 1},\dots, v_{0,\ell}$ be $\ell$ elements of $V$, and set $V^{(1)} = V\setminus\{v_{0, 1},\dots, v_{0,\ell}\}$. Then, we can write
\begin{align}\label{want holder}
\nonumber & \int_{\R^d}\exp\Bigg(-c_{\eta}\sum_{j=1}^n \|\langle v_j, \xi\rangle\|_{\T}^2-\frac{|\xi|_2^2}{2}\Bigg)d\xi
\\ \nonumber & = \int_{\R^d}\exp\Bigg(-c_{\eta}\sum_{u\in V^{(1)}}\|\langle v,\xi\rangle\|_{\T}^2\Bigg)\prod_{j=1}^\ell\exp\Bigg(-c_{\eta}\|\langle v_{0,j}, \xi\rangle\|_{\T}^2\Bigg)\exp\Bigg(-\frac{|\xi|_2^2}{2}\Bigg)d\xi
\\ & = \int_{\R^d}\prod_{j=1}^\ell\exp\left(-c_{\eta}\|\langle v_{0,j}, \xi\rangle\|_{\T}^2-\frac 1 {\ell}\Bigg(c_{\eta}\sum_{u\in V^{(1)}}\|\langle v,\xi\rangle\|_{\T}^2+\frac{|\xi|_2^2}{2}\Bigg)\right)d\xi.
\end{align}
It follows from H\"older's inequality that if $f_1,\dots,f_\ell$ are positive functions, then 
\[\int_{\R^d}\prod_{j=1}^\ell f_j \le \prod_{j=1}^d\left(\int_{\R^d}f_j^\ell\right)^{\frac 1 \ell}.\]
In particular, there exists $j_0 \in \{1,\dots, \ell\}$ such that
\[\int_{\R^d}\prod_{j=1}^\ell f_j \le \int_{\R^d}f_{j_0}^\ell.\]
Therefore, applying H\"older's inequality to the right side of~\eqref{want holder}, we conclude that there exists an index $j_0\in \{1,\dots,\ell\}$ such that
\begin{multline}\label{holder gives}
\int_{\R^d}\prod_{j=1}^\ell\exp\left(-c_{\eta}\|\langle v_{0,j}, \xi\rangle\|_{\T}^2-\frac 1 {\ell}\Bigg(c_{\eta}\sum_{u\in V^{(1)}}\|\langle v,\xi\rangle\|_{\T}^2+\frac{|\xi|_2^2}{2}\Bigg)\right)d\xi
\\
\le \int_{\R^d}\exp\Bigg(-c_{\eta}\sum_{v\in V^{(1)}}\|\langle u, \xi\rangle\|_{\T}^2\Bigg)\exp\Bigg(-c_{\eta}\ell~\|\langle v_{0,j_0}, \xi \rangle\|_{\T}^2\Bigg)\exp\Bigg(-\frac{|\xi|_2^2}{2}\Bigg)d\xi.
\end{multline}
Plugging~\eqref{holder gives} into~\eqref{want holder} gives
\begin{multline*}
\int_{\R^d}\exp\Bigg(-c_{\eta}\sum_{j=1}^n\|\langle  v_j, \xi \rangle\|_{\T}^2-\frac{|\xi|_2^2}{2}\Bigg)d\xi
\\
\le \int_{\R^d}\exp\Bigg(-c_{\eta}\sum_{v\in V^{(1)}}\|\langle u, \xi\rangle\|_{\T}^2\Bigg)\exp\Bigg(-c_{\eta}\ell~\|\langle v_{0,j_0}, \xi \rangle\|_{\T}^2\Bigg)\exp\Bigg(-\frac{|\xi|_2^2}{2}\Bigg)d\xi.
\end{multline*}
Set $w_1 = v_{0, j_0}$. If $d = 1$, then we stop at this point. Otherwise, by the assumption on the vectors in $V$, we can choose $\ell$ elements $v_{1,1},\dots, v_{1,\ell}'$ of $V^{(1)}$ which lie at a distance at least $R$ from the $\spn\{w_1\}$. Set $V^{(2)} = V^{(1)}\setminus\{v_{1, 1},\dots, v_{1,\ell}\}$, and as before, find $j_1$ such that
\begin{align*}
&\int_{\R^d}\exp\Bigg(-c_{\eta}\sum_{j=1}^n\|\langle v_j, \xi \rangle\|_{\T}^2-\frac{|\xi|_2^2}{2}\Bigg)d\xi
\\ &\le\int_{\R^d} \exp\Bigg(-c_{\eta}\sum_{v\in V_2}\|\langle u,\xi \rangle\|_{\T}^2\Bigg) \exp\Bigg(-c_{\eta}\ell\|\langle w_1,\xi\rangle\|_{\T}^2\Bigg)
\\ & \qquad\quad \cdot \exp\Bigg(-c_{\eta}\ell\|\langle v_{1,j_1},\xi\rangle\|_{\T}^2\Bigg)\exp\Bigg(-\frac{|\xi|_2^2}{2}\Bigg)d\xi.
\end{align*}
Now, set $w_2 = v_{1, j_1}$, and repeat this procedure $d-1$ times, eventually obtaining
\begin{multline*}
\int_{\R^d}\exp\Bigg(-c_{\eta}\sum_{j=1}^n\|\langle v_j,\xi \rangle\|_{\T}^2\Bigg)d\xi
\\ \le\int_{\R^d}  \exp\Bigg(-c_{\eta}\sum_{v\in V^{(d)}}\|\langle u,\xi\rangle\|_{\T}^2\Bigg) \exp\Bigg(-c_{\eta}\ell\sum_{j=1}^d\|\langle w_j,\xi \rangle\|_{\T}^2\Bigg)\exp\Bigg(-\frac{|\xi|_2^2}{2}\Bigg)d\xi,
\end{multline*}
for some $w_1,\dots, w_d$, now with the property that 
\begin{align}\label{basis large dist}
\inf_{1 \le j \le n}\Big[\dist_2\big(w_j,\spn\{w_1,\dots, w_{j-1}\}\big)\Big]\ge R,
\end{align} 
for all $1\le j\le d$, and $V^{(d)}$ is a subset of $V$ with at least $n-k$ vectors. Note also that we can choose $w_1$ such that $|w_1|_2 \ge R$. In addition, the following trivial bound holds,
\begin{align*}
\exp\Bigg(-c_{\eta}\sum_{v\in V^{(d)}}\|\langle u,\xi\rangle\|_{\T}^2\Bigg)\le 1.
\end{align*}
Thus, we have
\begin{align}\label{ineq on wj's}
\int_{\R^d}\exp\Bigg(-c_{\eta}\sum_{j=1}^n\|\langle v_j, \xi \rangle\|_{\T}^2-\frac{|\xi|_2^2}{2}\Bigg)d\xi \le \int_{\R^d}  \exp\Bigg(-c_{\eta}\ell \sum_{j=1}^d\|\langle w_j, \xi\rangle\|_{\T}^2-\frac{|\xi|_2^2}{2}\Bigg) d\xi.
\end{align}
To bound the right side of \eqref{ineq on wj's}, use induction on $d$. Consider first the case $d=1$. By Lemma \ref{lem TV},
\begin{align}\label{case 1 dim}
\int_{\R}\exp\left(-c_{\eta}\ell\|\xi w+\alpha\|_{\T}^2-\frac{|\xi|_2^2}{2}\right)d\xi \le \frac{40(|w|+1)}{|w|\, \sqrt{1+c_{\eta}\ell}}.
\end{align}
In the case $d=1$, we the assumption on the vectors $\{v_1,\dots,v_n\}$ implies that $|w|\ge R$, and so~\eqref{case 1 dim} gives
\begin{align}\label{case 1 dim with r}
\int_{\R}\exp\left(-c_{\eta}\ell\|\xi w+\alpha\|_{\T}^2-\frac{|\xi|_2^2}{2}\right)d\xi \le \frac{40(R+1\big)}{R\, \sqrt{1+c_{\eta}\ell}}.
\end{align}
To handle the general case, use Fubini's Theorem and induction on $d$. By following a Gram-Schmidt process, find an orthonormal basis $\{e_1,\dots, e_d\}$ of $\R^d$, such that $\spn\{w_1,\dots, w_j\} =\spn\{e_1,\dots, e_j\}$, for all $1\le j\le d$. Suppose that the desired claim holds for dimension $d-1$, and for a vector $\xi\in\R^d$, write $\xi =\xi'+\xi_d e_d$, where $\xi'\in\spn\{e_1,\dots, e_{d-1}\}$ and $\xi_d\in\R$. This gives
\begin{align*}
\sum_{j=1}^d\|\langle w_j, \xi\rangle\|_{\T}^2 &  = \sum_{j=1}^{d-1}\|\langle w_j, \xi' \rangle\|_{\T}^2 +\|\xi_d\langle w_d, e_d\rangle\|_{\T}^2.
\end{align*}
Note that~\eqref{basis large dist} implies that 
\begin{align}\label{large dist d dim}
|\langle w_d, e_d\rangle| \ge R.
\end{align}
Thus, we have
\begin{eqnarray}\label{bound case k integer}
\nonumber &&\int_{\R^{d}} \exp\Bigg(-c_{\eta}\ell\sum_{j=1}^d\|\langle w_j, \xi \rangle\|_{\T}^2 - \frac{|\xi|_2^2}{2}\Bigg)d\xi 
\\ \nonumber && =\int_{\R^{d-1}}
\Bigg[\exp\Bigg(-c_{\eta}\ell\sum_{j=1}^{d-1}\|\langle w_j, \xi' \rangle\|_{\T}^2-\frac{|\xi'|_2^2}{2}\Bigg)
\int_{\R}\exp\Bigg(-c_{\eta}\ell\|\xi_d\langle w_d, e_d \rangle\|_{\T}^2-\frac{\xi_d^2}{2}\Bigg)
d\xi_d\Bigg]d\xi'
\\  \nonumber && \stackrel{\eqref{case 1 dim with r}\wedge\eqref{large dist d dim}}{\le} \frac{40(R+1)}{R\, \sqrt{1+c_{\eta}\ell}}\int_{\R^{d-1}}
\exp\Bigg(-c_{\eta}\ell\sum_{j=1}^{d-1}\|\langle w_j, \xi' \rangle\|_{\T}^2-\frac{|\xi'|_2^2}{2}\Bigg)d\xi' 
\\ && \qquad\stackrel{(*)}{\le}  \left(\frac{40(R+1)}{R\, \sqrt{1+c_{\eta}\ell}}\right)^d,
\end{eqnarray}
where in ($*$) we used the induction hypothesis. Combining~\eqref{from esseen},~\eqref{ineq on wj's} and~\eqref{bound case k integer} gives that when $k=d\ell$,
\begin{align*}
\mathcal I(X_V) \le \left(\frac{40(R+1)}{R\, \sqrt{1+c_{\eta}\ell}}\right)^d.
\end{align*} 
In general, assume that $d(\ell-1) \le k \le d\ell$. Then we know that there are at least $n-d\ell$ vectors which satisfy $\dist_2(v_j,H)\ge R$ for every hyperplane $H\subseteq \R^d$. In such case, since $\ell \ge k/d$ we have
\begin{align*}
\mathcal I(X_V) \le \left(\frac{40(R+1)}{R\, \sqrt{1+c_{\eta}\ell}}\right)^d \le \left(\frac{40(R+1)}{R\, \sqrt{1+c_{\eta}(k/d)}}\right)^d = \left(40\frac{R+1}{R}\sqrt{\frac d {d+c_{\eta}k}}\,\right)^d,
\end{align*}
which completes the proof.
\end{proof}

Now with Proposition~\ref{prop hyper} in hand, we can prove Theorem~\ref{thm hyper}.

\begin{proof}[Proof of Theorem~\ref{thm hyper}]
By~\eqref{esseen k}, we have
\begin{align*}
\rho_R^K(X_V) \le \kappa(K)^d\I\left(\frac1 R\, X_V\right) = \kappa(K)^d\I\left( X_{V_R}\right),
\end{align*}
where $V_R = \{R^{-1}v_1,\dots,R^{-1}v_n\}$. In particular, by the assumptions on the set $V$, we know that for at least $n-k$ vectors $v'_j\in V_R$, we have $\dist_2(v'_j, H) \ge 1$. Hence, using Proposition~\ref{prop hyper} with $R=1$, we have
\begin{align*}
\rho_R^K(X_V) \le \kappa(K)^d\left(80\sqrt{\frac{d}{d+c_{\eta}k}}\,\right)^d = \big(80\kappa(K)\big)^d\left(\frac d {d+c_{\eta}k}\right)^{d/2},
\end{align*}
which completes the proof.
\end{proof}

\section{Proof of Theorem \ref{thm gap}}\label{sec proof gap}

Recall again that by \eqref{esseen k}, we have
\begin{align*}
\rho_R^K(X_V) \le \big(\kappa(K)R\big)^d\int_{\R^d}\prod_{j=1}^n \big| \E \exp\left(i\left\langle \eta_j v_j, \xi\right\rangle\right)\big|\, e^{-\frac{R^2|\xi|_2^2}{2}} d\xi.
\end{align*}
For a random variable $\eta$ and a real number $a$, define
\begin{align}\label{eta norm}
\|a\|_{\eta} = \frac 2 {\pi}\left(\E\left\|a(\eta_1-\eta_2)\right\|_{\T}^2\right)^{1/2},
\end{align}
where $\eta_1$ and $\eta_2$ are independent copies of $\eta$, and $\|\cdot\|_{\T}$ is as defined in~\eqref{def norm}. The constant $\frac 2 {\pi}$ is simply a normalisation constant which makes some of the computations simpler.
Now, it is easy to show that we have
\begin{align*}
\big|\E\exp\big(ia\eta\big)\big|^2 = \big|\E\cos\big(a(\eta_1-\eta_2)\big)\big| \le 1- \frac 2{\pi^2}\E\left\|a(\eta_1-\eta_2)\right\|_{\T}^2
\le \exp\Big(-\frac 1 2 \|a\|_{\eta}^2\Big).
\end{align*}
Hence,~\eqref{esseen k} implies in fact that we have
\begin{align}\label{esseen eta norm}
\rho_R^K(X_V) =\rho_1^K(X_{V_R}) \le \kappa(K)^d\int_{\R^d}\exp\Bigg(-\frac 1 2 \sum_{v\in V_R}\|\langle v,\xi\rangle\|_{\eta}^2 -\frac 1 2 |\xi|_2^2\Bigg)d\xi.
\end{align}
The first step in the proof of Theorem \ref{thm gap} is to find a large subset of $\R^d$ on which the sum $\sum_{v\in V_R}\|\langle v,s\rangle\|_{\eta}^2$ is relatively small. Such a set should then have some arithmetic structure. The proof is similar to \cite{NV11} and we present it for the sake of completeness, while making the required modifications.

\begin{prop}\label{prop subset}
Assume that $A>0$ is an absolute constant, and assume that for every $n$ sufficiently large, we have
\[\rho_R^K(X_V) \ge n^{-A}.\] 
Then there exists a positive integer $m$ satisfying
\begin{align*}
m \le 10\sqrt{d\log\left(\kappa(K)n^A\right)},
\end{align*}
such that for any sufficiently large integer $N$, there exists a finite set $S\subseteq \R^d$ 
\begin{align*}
|S| \ge \left(\frac{N}{2\sqrt d\kappa(K)}\right)^d\rho_R^K(X_V),
\end{align*}
and if we let $V_R = \{R^{-1}v_1,\dots ,R^{-1}v_n\}$, then there exists a real number $\alpha$ which depends only on $C_{\eta}$ such that the set $S$ satisfies
\begin{align*}
\frac 1 {|S|}\sum_{s\in S}\Bigg[\sum_{v\in V_R}\|\alpha\langle v,s\rangle\|_{\eta}^2\Bigg] \le 8\pi^2 m.
\end{align*}
\end{prop}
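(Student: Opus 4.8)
The plan is to run a pigeonhole/dyadic argument on the integral in~\eqref{esseen eta norm}, in the spirit of the Halász–Nguyen–Vu approach. First I would pass from $\rho_R^K(X_V)=\rho_1^K(X_{V_R})$ to the bound
\[
n^{-A}\le \rho_1^K(X_{V_R})\le \kappa(K)^d\int_{\R^d}\exp\Bigl(-\tfrac12\sum_{v\in V_R}\|\langle v,\xi\rangle\|_\eta^2-\tfrac12|\xi|_2^2\Bigr)d\xi,
\]
and observe that this forces the integrand to be non-negligible on a set of $\xi$ of substantial Gaussian mass. Concretely, splitting the range of the "level" $\sum_{v\in V_R}\|\langle v,\xi\rangle\|_\eta^2$ into dyadic (or unit-length, scaled by $m^2$) bands $B_m=\{\xi:\ (m-1)^2\le \cdots\le m^2\}$ for $m=1,2,\dots$, the contribution of band $B_m$ is at most $e^{-(m-1)^2/2}\gamma_d(\R^d)$-type quantity, so the tail beyond $m\gtrsim \sqrt{d\log(\kappa(K)n^A)}$ contributes less than, say, half of $n^{-A}\kappa(K)^{-d}$. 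Hence some single band with $m\le 10\sqrt{d\log(\kappa(K)n^A)}$ carries at least a comparable share of the integral; fix that $m$. This yields a set $T=\{\xi:\ \sum_{v\in V_R}\|\langle v,\xi\rangle\|_\eta^2\le m^2\}$ with $\int_T e^{-|\xi|_2^2/2}d\xi$ bounded below in terms of $\kappa(K)^{-d}n^{-A}$.

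Next I would discretise $T$: for a large integer $N$, tile $\R^d$ by cubes of side $1/N$ and let $S$ be the set of cube-centers whose cube meets $T$ (or, cleaner, intersect $T$ with a fine lattice $\tfrac1N\Z^d$). Since $T$ sits inside a Euclidean ball of radius $O(\sqrt d\,m)$ but more importantly $\int_T e^{-|\xi|_2^2/2}\,d\xi$ is bounded below, comparing the measure of $T$ to the lattice spacing gives
\[
|S|\ \gtrsim\ N^d\,\mu_d(T)\ \ge\ \Bigl(\tfrac{N}{2\sqrt d\,\kappa(K)}\Bigr)^d\rho_R^K(X_V),
\]
after absorbing the Gaussian weight $e^{-|\xi|_2^2/2}\ge$ const on the relevant region (or, more carefully, keeping the weight and using $\mu_d(T)\ge \int_T e^{-|\xi|_2^2/2}d\xi$). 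On this $S$ the averaged quantity $\frac1{|S|}\sum_{s\in S}\sum_{v\in V_R}\|\langle v,s\rangle\|_\eta^2$ is at most $m^2$ by construction, up to an error from moving from $\xi\in T$ to the nearest lattice point $s$: here one uses that $\|\cdot\|_\eta$ is, by~\eqref{eta norm} and the quasi-triangle inequality for $\|\cdot\|_\T$, Lipschitz-type controllable, and that $N$ can be taken as large as we like, so the discretisation error is negligible.

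Finally, the factor $\alpha$ and the clean constant $8\pi^2 m$ (rather than $m^2$) come from the normalisation in~\eqref{eta norm} together with the input~\eqref{assume eta}: using that with probability $\ge\frac12$ one has $1\le|\eta_1-\eta_2|\le C_\eta$, a standard averaging over a dilation parameter $\alpha\in(0,1]$ (or a fixed well-chosen $\alpha$ depending only on $C_\eta$) converts a bound on $\sum_v\|\langle v,s\rangle\|_\eta^2$ into a bound on $\sum_v\|\alpha\langle v,s\rangle\|_\eta^2$ with a linear (in $m$) rather than quadratic dependence, matching the statement. The main obstacle I anticipate is the bookkeeping in the discretisation step: one must choose the lattice fine enough that replacing $\xi\in T$ by a lattice point $s\in S$ changes each $\|\langle v,s\rangle\|_\eta$ by a controlled amount \emph{uniformly in $v\in V_R$}, and simultaneously keep the lower bound on $|S|$ of the stated shape; handling this correctly — and tracking how the constants $2\sqrt d$, $10$, $8\pi^2$ emerge — is where the real work lies, whereas the dyadic pigeonhole and the passage through~\eqref{assume eta} are routine.
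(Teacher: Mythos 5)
Your starting point (the Esseen-type bound \eqref{esseen eta norm}, a level-set pigeonhole, then a discretisation) is the same as the paper's, but there is a genuine gap at the heart of the quantitative claim. Your bands $(m-1)^2\le\sum_{v\in V_R}\|\langle v,\xi\rangle\|_\eta^2\le m^2$ produce a sublevel set at height $m^2\sim d\log\big(\kappa(K)n^A\big)$, and you then assert that the passage through \eqref{assume eta} and the dilation parameter $\alpha$ upgrades this to the linear bound $8\pi^2m$ with $m\le 10\sqrt{d\log\big(\kappa(K)n^A\big)}$. That step cannot do this: as in \eqref{lower bound expect}, replacing $\E\|(\eta_1-\eta_2)a\|_{\T}^2$ by $\frac{1}{2}\|\alpha a\|_{\T}^2$ for some $\alpha\in[1,C_\eta]$ only costs absolute constants and is completely insensitive to $m$; it cannot convert a bound of order $m^2$ into one of order $m$. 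In the paper the linear dependence comes from elsewhere: the level sets $T_m=\{\xi:\sum_{v}\|\langle v,\xi\rangle\|_\eta^2+|\xi|_2^2\le m\}$ are defined with linear thresholds $m\le M$, the containment $T_m\subseteq B_2^d(0,\sqrt m)$ is used to pigeonhole onto a ball of radius $1/2$, and a difference step ($\xi_1-\xi_2$, via the triangle inequality for $\|\cdot\|_\eta$ and \eqref{inclusion difference}) places the set inside the unit ball at height $4m$, with $e^{m/4}m^{-d/2}\ge e^{d/2}(2d)^{-d/2}$ yielding the stated $\big(2\sqrt d\,\kappa(K)\big)^{-d}\rho$ lower bound. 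Without these ingredients your outline proves the proposition only with $8\pi^2m$ replaced by a quantity of order $d\log\big(\kappa(K)n^A\big)$, which is quantitatively weaker and would degrade $k$ (and hence all of Theorem~\ref{thm gap}) through the choice \eqref{choice k}.

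Two further points in your discretisation are off. The suggestion to ``absorb the Gaussian weight $e^{-|\xi|_2^2/2}\ge$ const on the relevant region'' fails: that region has Euclidean radius of order $\sqrt{d\log\big(\kappa(K)n^A\big)}$, on which the weight drops to roughly $n^{-2A}\kappa(K)^{-2d}$; the paper avoids this precisely by first moving (via the difference step) to a subset $T$ of $B_2^d$ and only then comparing Lebesgue measure with $N^d$. Also, your ``nearest lattice point plus Lipschitz control uniformly in $v$'' step is exactly what the paper sidesteps: since the $v\in V_R$ carry no a priori bound on $|v|_2$, the paper averages over translates $x+B_0$ of the discrete box to find a translate whose lattice points already lie in $T$, and then takes discrete differences $\xi_0-\xi$ (raising the level from $4m$ to $16m$), so that $S\subseteq B_0-B_0$ and in particular $\|s\|_\infty\le 2$. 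This last property is not in the statement of the proposition, but it is used crucially in the proof of Lemma~\ref{lem dual}, so a set $S$ of lattice points scattered over a ball of radius $\sim\sqrt{d\log n}$, as in your construction, would not serve its later purpose.
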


\begin{proof}
First, notice that we have $\rho^K_R(X_V) = \rho^K_1(X_{V_R})$. Let $\rho = \rho_1^K(X_{V_R})$. We have
\begin{eqnarray*}
\int_{|\xi|_2> M}\exp\left(-\frac 1 2\sum_{v\in V_R}\|\langle v,\xi\rangle\|_{\eta}^2-\frac 1 2|\xi|_2^2\right)d\xi \le \int_{|\xi|_2> M}\exp\left(-\frac 1 2|\xi|_2^2\right)d\xi \le (2\pi)^{d/2}e^{-M^2/4},
\end{eqnarray*}
where we used the fact the a normal distribution has a subgaussian tail. In particular, assuming that $n\ge \kappa(K)^{-1/A}$, and choosing $M$ to be
\[M = \sqrt{-4\log\left(\frac{n^{-A}(2\pi)^{d/2}}{2\kappa(K)^d}\right)} \le 10\sqrt{d\log \big(\kappa(K)n^A\big)},\]
it follows that
\begin{align}\label{bound tail}
\int_{|\xi|_2> M}\exp\left(-\frac 1 2\sum_{v\in V_R}\|\langle v,\xi\rangle\|_{\eta}^2-\frac 1 2|\xi|_2^2\right)d\xi  \le (2\pi)^{d/2}e^{-M^2/2} = \frac{n^{-A}}{2\kappa(K)^d} \le \frac{\rho}{2\kappa(K)^d} .
\end{align}
Using~\eqref{esseen eta norm}, it follows that
\begin{align}\label{bound int everything}
\frac{\rho}{\kappa(K)^d} \le \int_{\R^d}\exp\left(-\frac 1 2\sum_{v\in V_R}\|\langle v,\xi\rangle\|_{\eta}^2-\frac 1 2|\xi|_2^2\right)d\xi.
\end{align}
Thus, combining~\eqref{bound tail} and~\eqref{bound int everything}, we have
\begin{align}\label{lower bound with m}
\int_{|\xi|_2\le M}\exp\left(-\frac 1 2\sum_{v\in V_R}\|\langle v,\xi\rangle\|_{\eta}^2-\frac 1 2|\xi|_2^2\right)d\xi \ge \frac{\rho}{2\kappa(K)^d}.
\end{align}
For $m \in \{0,1,\dots, M\}$, define
\[T_m = \left\{\xi \in \R^d~\left|~\sum_{v\in V_R}\|\langle v,\xi\rangle\|_{\eta}^2+ |\xi|_2^2 \le m\right.\right\}.\]
Then~\eqref{lower bound with m} implies that
\begin{align*}
\sum_{m=0}^M\mu_d(T_m)e^{-m/2} \ge \frac{\rho}{2\kappa(K)^d},
\end{align*}
where $\mu_d(\cdot)$ denotes the Lebesgue measure on $\R^d$. It follows that there exists $m\le M$ such that $\mu_d(T_m) \ge \frac{\rho e^{m/4}}{2\kappa(K)^d}$. Now, since clearly we have $T_m \subseteq B_2^d(0,\sqrt m)$, by the pigeon-hole principle, there exists $x\in \R^d$ such that $B_2^d(x,1/2)\subseteq B_2^d(0,\sqrt m)$ and
\begin{align}\label{lower bound measure}
\mu_d\big(B_2(x,1/2)\cap T_m\big) \ge \mu_d(T_m)m^{-d/2} \ge \frac{\rho e^{m/4}m^{-d/2}}{2\kappa(K)^d},
\end{align}
where we recall that for $\alpha >0$, $B_2^d(x,\alpha) = x+\alpha\,B_2^d$. Next, let $\xi_1,\xi_2\in B(x,1/2)\cap T_m$. Since $\|\cdot\|_{\eta}$ satisfies the triangle inequality, we have
\[\sum_{v\in V_R}\|\langle v,\xi_1-\xi_2\rangle\|_{\eta}^2 \le 2\sum_{v\in V_R}\|\langle v,\xi_1\rangle\|_{\eta}^2+2\sum_{v\in V_R}\|\langle v,\xi_2\rangle\|_{\eta}^2 \stackrel{(*)}{\le} 2m+2m = 4m,\]
where in ($*$) we used the fact that $\xi_1,\xi_2\in T_m$. Now, we have $\xi_1-\xi_2 \in B_2^d$. Also, we have
\begin{align}\label{inclusion difference}
\mu_d\Big(B_2^d(x,1/2)\cap T_m - B_2^d(x,1/2)\cap T_m\Big) \ge \mu_d\Big(B_2^d(x,1/2)\cap T_m\Big).
\end{align}
Hence, defining 
\begin{align*}
T = \left\{\xi\in B_2^d ~\left|~ \sum_{v\in V_R}\|\langle v,\xi\rangle\|_{\eta}^2 \le 4m\right.\right\},
\end{align*}
it follows that we have
\begin{eqnarray*}
\mu_d(T) \stackrel{\eqref{lower bound measure}\wedge \eqref{inclusion difference}}{\ge} \frac{\rho e^{m/4}m^{-d/2}}{2\kappa(K)^d}.
\end{eqnarray*} 
Now, use the fact that for every positive integer $m$, $e^{m/4}m^{-d/2} \ge e^{d/2}(2d)^{-d/2}$ to conclude that
\begin{align}\label{size T}
\mu_d(T) \ge \frac 1 2\left(\frac{\sqrt e}{\sqrt{2d}\kappa(K)}\right)^d\rho \ge \frac{\rho}{\left(2\sqrt d \kappa(K)\right)^d}.
\end{align}
Next, for a given positive integer $N$, let $B_0$ be the discrete box 
\[B_0 = \big\{(k_1/N,\dots,k_d/N)\in \R^d ~\big| -N\le k_j \le N\big\}.\]
Consider all the boxes $x+B_0$ with $x\in [0,1/N]$. Since
\[\lim_{N\to \infty}\frac 1 {N^d}\E\big|(x+B_0)\cap T\big| = \mu_d(T),\]
it follows that there exists $x_0\in \R^d$ and a positive integer $N$, such that 
\[\big|(x_0+B_0)\cap T\big| \ge N^d\mu_d(T).\] 
Fix $\xi_0\in x_0+B_0$. For any $\xi\in (x_0+B_0)\cap T$, we have
\begin{align}\label{upper bound sum difference}
\sum_{v\in V_R}\|\langle v,\xi_0-\xi\rangle\|_{\eta}^2 \le 2\sum_{v\in V_R}\|\langle v,\xi_0\rangle\|_{\eta}^2+2\sum_{v\in V_R}\|\langle v,\xi\rangle\|_{\eta}^2 \le 16m.
\end{align}
We have, 
\[\xi_0-\xi \in B_0-B_0 = \big\{(k_1/N,\dots,k_d/N)\in \R^d~\big| -2N \le k_j \le 2N\big\}.\] 
This means that there exists a subset $S\subseteq (B_0-B_0)\cap T$ of size at least $N^d\mu_d(T)$ such that for any $s\in S$,
\begin{align*}
\sum_{v\in V_R}\|\langle v,s\rangle\|_{\eta}^2 \stackrel{\eqref{upper bound sum difference}}{\le} 16m.
\end{align*}
Hence, we have
\begin{align}\label{upper bound average}
\frac 1 {|S|}\sum_{s\in S}\Bigg[\sum_{v\in V_R}\|\langle v,s\rangle\|_{\eta}^2\Bigg] \le 16 m.
\end{align}
By combining~\eqref{upper bound average} with \eqref{eta norm}, it follows that
\begin{align}\label{new bound average}
\frac 1 {|S|} \sum_{s\in S}\Bigg[\sum_{v\in V_R}\E\|(\eta_1-\eta_2) \langle v,s\rangle \|_{\T}^2\Bigg] \le 4\pi^2m.
\end{align}
Now, there exists $\alpha \in [1,C_{\eta}]$, such that
\begin{align}\label{lower bound expect}
\E\|(\eta_1-\eta_2) \langle v,s\rangle \|_{\T}^2 \ge \p\big(1\le |\eta_1-\eta_2| \le C_{\eta}\big)\, \|\alpha \langle v,s\rangle \|_{\T}^2 \stackrel{\eqref{assume eta}}{\ge} \frac 1 2 \|\alpha \langle v,s\rangle \|_{\T}^2.
\end{align}
Therefore, combining~\eqref{new bound average} and~\eqref{lower bound expect}, we have
\begin{align*}
\frac 1 {|S|} \sum_{s\in S}\Bigg[\sum_{v\in V_R}\|\alpha \langle v,s\rangle \|_{\T}^2\Bigg] \le 8\pi^2m,
\end{align*}
which completes the proof.
\end{proof}

Fix $\eps \in (0,1)$. Let $n'$ be an integer between $n^{\eps}$ and $n$. Let $S\subseteq \R^d$ be the set from Proposition~\ref{prop subset}. Say that a vector $v\in V_R$ is said to be \emph{bad} if
\begin{align*}
\frac 1 {|S|}\sum_{s\in S}\|\alpha \langle v,s\rangle\|_{\T}^2 \ge \frac{8\pi^2m}{n'}.
\end{align*}
Let $V_R'$ denote all the vectors in $V_R$ which are not bad. It follows that $|V_R'| \ge n-n'$. Recall also that if $A\subseteq \R^d$ and $k$ is a positive integer, then we denote
\begin{align*}
kA = \Big\{\sum_{j=1}^ka_j~\Big|~a_j \in A\Big\}.
\end{align*}

\begin{lem}\label{lem dual}
Let $n'\in [n^{\eps},n]$ and choose 
\begin{align}\label{choice k}
k = \sqrt{\frac{n'}{64\pi^2 m}}\, .
\end{align}
If $N$ is sufficiently large, then
\begin{align*}
\mu_d\left(k(V_R'\cup\{0\})+B_{\infty}\left(0,\frac 1 {256d\alpha}\right)\right) \le 36\pi^2\left(\frac{2\sqrt d\kappa(K)}{\alpha}\right)^d\left(\frac 1 {\rho_R^K(X_V)}\right).
\end{align*}
\end{lem}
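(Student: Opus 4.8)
The plan is to estimate the Lebesgue measure of the $B_\infty$-neighbourhood of $k(V_R'\cup\{0\})$ by relating it, via the set $S$ from Proposition~\ref{prop subset}, to the small-ball probability $\rho_R^K(X_V)$. The key observation is that if $v\in V_R'$ is not bad and we form a $k$-fold sum $w = \sum_{i=1}^k v_{j_i}$ of non-bad vectors, then by Cauchy--Schwarz over the $k$ summands and then averaging over $s\in S$,
\begin{align*}
\frac 1{|S|}\sum_{s\in S}\|\alpha\langle w,s\rangle\|_{\T}^2 \le \frac k{|S|}\sum_{s\in S}\sum_{i=1}^k\|\alpha\langle v_{j_i},s\rangle\|_{\T}^2 \le k^2\cdot\frac{8\pi^2 m}{n'} = \frac 18,
\end{align*}
using the choice~\eqref{choice k} of $k$ (and $\|0\|_{\T}=0$ lets us also include $0$ as a summand, so the bound holds for all $w\in k(V_R'\cup\{0\})$). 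Hence for each such $w$ there is a large fraction of $s\in S$—say at least half, by Markov—with $\|\alpha\langle w,s\rangle\|_{\T}^2$ small, which forces $\alpha\langle w,s\rangle$ to lie near $\pi\Z$. The same near-integrality should then persist for $w' = w + u$ with $u\in B_\infty(0,\tfrac 1{256d\alpha})$, because $|\langle u,s\rangle|\le d\|u\|_\infty|s|_\infty$ and $s\in B_0-B_0\subseteq 2B_2^d$ is bounded, so the perturbation of $\alpha\langle w,s\rangle$ is much smaller than the spacing $\pi$ between the integer points—this is exactly why the radius $\tfrac 1{256d\alpha}$ was chosen.

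With this in hand, I would run a covering/counting argument in the spirit of~\cite{NV11}. Fix a generic $s_0\in S$ (or a suitable subset of coordinates worth of $s$'s, using that $S$ spans—this is where Proposition~\ref{prop subset}'s lower bound on $|S|$ enters implicitly through the structure of $T\subseteq B_2^d$). For each point $w$ of $k(V_R'\cup\{0\})$, the vector $(\langle w,s\rangle)_{s}$ lands near a lattice point; since the $s$'s come from the bounded box $B_0-B_0$, the image $k(V_R'\cup\{0\})$ under the map $x\mapsto(\langle x,s\rangle)_{s\in S_0}$ (for a well-chosen $d$-element subset $S_0\subseteq S$ giving an invertible matrix) is contained in a bounded neighbourhood of a lattice, and the number of lattice points is controlled by the volume. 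Dualizing: the measure of the $B_\infty$-neighbourhood is at most the number of admissible lattice points times the volume of a single cell, and the number of admissible lattice points is bounded by $|S|^{-1}$-type quantities—more precisely, one integrates the indicator of ``$\alpha\langle w,s\rangle$ near $\pi\Z$ for at least half the $s\in S$'' and uses $\int_{B_2^d}\mathbf 1[\|\alpha\langle w,s\rangle\|_\T\le\delta]\,ds\le C\delta/|w|$-style estimates together with $|S|\ge (N/2\sqrt d\kappa(K))^d\rho_R^K(X_V)$ to extract the factor $1/\rho_R^K(X_V)$.

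Concretely, the cleanest route: let $F = k(V_R'\cup\{0\})+B_\infty(0,\tfrac 1{256d\alpha})$ and write
\begin{align*}
\mu_d(F) = \int_{\R^d}\mathbf 1_F(w)\,dw \le \int_{\R^d}\mathbf 1_F(w)\cdot\frac{2}{|S|}\sum_{s\in S}\mathbf 1\!\left[\|\alpha\langle w,s\rangle\|_\T\le \tfrac 14\right]dw,
\end{align*}
which is valid since every $w\in F$ has at least half of its $s\in S$ satisfying the inner condition. Then I would bound $\mathbf 1_F(w)$ crudely (it forces $w$ into a ball of radius $\sim k\max\|v\|+1$, but more efficiently one restricts the $w$-integral to the support and changes variables using the invertible Gram-type matrix built from $d$ of the $s$'s) and compute $\int\mathbf 1_F(w)\,\mathbf 1[\|\alpha\langle w,s\rangle\|_\T\le 1/4]\,dw$ for fixed $s$ as a sum of thin slabs of width $\sim 1/(\alpha|s|)$ spaced $\pi/(\alpha|s|)$ apart; the number of slabs meeting $F$ is $O(|s|\cdot\mathrm{diam}(F))$, giving a net contribution that, after summing over $s\in S$ and dividing by $|S|$, telescopes to something like $(2\sqrt d\kappa(K)/\alpha)^d/\rho$. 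The numerical constant $36\pi^2$ is then assembled from $2$ (the Markov factor), the $8\pi^2$ from Proposition~\ref{prop subset}, and the geometric series of slab contributions.

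\textbf{Main obstacle.} The delicate point is the transition from ``averaged smallness over $S$'' to a genuine volume bound with the factor $1/\rho_R^K(X_V)$: one must choose the right $d$ coordinates $s\in S$ so that the linear map $x\mapsto(\langle x,s\rangle)$ is well-conditioned (otherwise the change of variables loses too much), and control the determinant in terms of $N$ and $\kappa(K)$ so that, after letting $N\to\infty$, the $N$-dependence cancels exactly against $|S|\ge(N/2\sqrt d\kappa(K))^d\rho$. This bookkeeping of the $N$-powers—ensuring the box $B_0-B_0$'s mesh $1/N$ and the lower bound on $|S|$ conspire to leave a clean, $N$-free bound—is where essentially all the work lies; everything else is the Cauchy--Schwarz step above and routine slab-counting.
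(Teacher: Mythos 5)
Your opening step is the same as the paper's: from the definition of non-bad vectors and the choice $k=\sqrt{n'/(64\pi^2 m)}$, the triangle inequality for $\|\cdot\|_{\T}$ plus Cauchy--Schwarz gives, for every $w\in k(V_R'\cup\{0\})$, that the $S$-average of $\|\alpha\langle w,s\rangle\|_{\T}^2$ is at most a small constant, and since $s\in B_0-B_0$ has $|s|_\infty\le 2$, a perturbation $x$ with $|x|_\infty\le \tfrac{1}{256d\alpha}$ moves each phase $\alpha\langle w,s\rangle$ by at most a tiny amount. Up to that point you agree with the paper. The gap is in the step that actually produces the volume bound with the factor $1/\rho_R^K(X_V)$. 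Your ``cleanest route'' is a first-moment argument: from ``at least half the $s\in S$ put $w$ in a slab'' you write $\mu_d(F)\le \frac{2}{|S|}\sum_{s\in S}\mu_d\bigl(F\cap\{\|\alpha\langle w,s\rangle\|_{\T}\le \tfrac14\}\bigr)$ and then count slabs for each fixed $s$. This cannot work: for each single $s$ the union of slabs occupies a constant fraction of space (width $\sim 1/(\alpha|s|_2)$ against spacing $\pi/(\alpha|s|_2)$), and in fact $F$ lies essentially inside the slabs for most $s$ by construction, so each term is of order $\mu_d(F)$ (your own slab count gives the even weaker bound $\mathrm{diam}(F)^d$, and $\mathrm{diam}(F)$ is uncontrolled since the $v_j$ can be arbitrarily large). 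Summing $|S|$ such terms and dividing by $|S|$ just returns $O(\mu_d(F))$ --- the argument is circular and the factor $1/|S|$, which via Proposition~\ref{prop subset} is what becomes $N^{-d}\rho_R^K(X_V)^{-1}$ after rescaling, never appears. Your alternative sketch (pick a well-conditioned $d$-element subset $S_0\subseteq S$, change variables, count lattice cells) is flagged by you as the ``main obstacle'' and left undone; it also uses only $d$ of the elements of $S$, so it discards exactly the cardinality information $|S|\ge (N/2\sqrt d\,\kappa(K))^d\rho_R^K(X_V)$ that the bound must exploit.

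The paper's mechanism is a second-moment (orthogonality) argument, which is the ingredient your proposal is missing. From the averaged bound one deduces the pointwise lower bound $\sum_{s\in S}\cos\bigl(2\langle s,\alpha w+x\rangle\bigr)\ge |S|/6$ for all $w\in k(V_R'\cup\{0\})$ and all $|x|_\infty\le \pi/(256d)$; then, because all $s\in S\subseteq B_0-B_0$ lie in $\tfrac1N\Z^d$, the characters $e^{2i\langle s,\cdot\rangle}$ are orthogonal over the box $[0,\pi N]^d$, so $\int_{[0,\pi N]^d}\bigl(\sum_{s\in S}\cos(2\langle s,x\rangle)\bigr)^2dx$ is of order $(\pi N)^d|S|$ rather than $(\pi N)^d|S|^2$. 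Chebyshev then bounds the measure of the superlevel set $\{f\ge |S|/6\}$ by $36\pi^2 N^d/|S|$, and the lower bound on $|S|$ from Proposition~\ref{prop subset} together with the homogeneity rescaling by $\alpha$ gives exactly the stated estimate. The gain of $1/|S|$ comes from $L^2$ cancellation between distinct $s\in S$, which no first-moment/union-type count over individual $s$ can detect; this is the essential idea absent from your proposal.
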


\begin{proof}
Let $v\in V_R'$. By definition of vectors which are not bad, we have
\begin{align*}
\sum_{s\in S}\|\alpha\langle v,s\rangle\|_{\T}^2 \le \frac{8\pi^2m|S|}{n'}.
\end{align*}
Since $\|\cdot\|_{\T}$ satisfies the triangle inequality, for any $w\in k(V_R'\cup\{0\})$ we have
\begin{align*}
\sum_{s\in S}\|\langle s,\alpha w\rangle\|_{\T}^2 \le k^2\sup_{v\in V_R'}\Bigg[\sum_{s\in S}\|\langle s,\alpha v\rangle\|_{\T}^2\Bigg] \le \frac{8\pi^2m|S|k^2}{n'} \stackrel{\eqref{choice k}}{\le} \frac{|S|}{4}.
\end{align*}
Since for any real number $a$, $\cos(2 a) \ge 1-2\|a\|_{\T}^2$,
\begin{align*}
\sum_{s\in S}\cos\big(2\langle s,\alpha w\rangle\big) \ge |S| -\sum_{s\in S}2\|\langle s,\alpha v\rangle\|_{\T}^2 \ge  \frac{|S|}{2}.
\end{align*}
Note that if $\|x\|_{\infty}\le \pi/256d$ and $\|s\|_{\infty} \le 2$, then $\cos\big(2\langle x,s\rangle\big) \ge 1/2$ and $\sin\big(2\langle x,s\rangle\big) \le 1/12$. Thus, for any $x$ with $\|x\|_{\infty} \le \pi/256d$,
\begin{align*}
\sum_{s\in S}\cos\big(2\langle s,\alpha w+x\rangle\big)  & = \sum_{s\in S}\cos\big(2\langle s,\alpha w\rangle\big)\cos\big(2\langle s,x\rangle\big) -\sum_{s\in S}\sin\big(2\langle s,\alpha w\rangle\big)\sin\big(2\langle s,x\rangle\big)
\\ &\ge \frac{|S|}4-\frac{|S|}{12} = \frac{|S|}{6}.
\end{align*}
On the other hand, we have
\begin{align*}
\int_{[0,\pi N]^d}\left(\sum_{s\in S}\cos\big(2\langle s,x\rangle\big)\right)^2dx \le \sum_{s_1,s_2\in S}\int_{x\in [0,\pi N]^d}\exp\big(2 i \langle s_1-s_2,x\rangle \big)dx = \pi^2N^2|S|,
\end{align*}
and so
\begin{align*}
\mu_d\left(\left\{x\in[0,\pi N]^d ~ \Bigg| ~\left(\sum_{s\in S}\cos\big(2\langle s,x\rangle\big)\right)^2\ge \left(\frac{|S|}{6}\right)^2\right\}\right) \le \frac{\pi^2N^2|S|}{(|S|/6)^2} = 36\pi^2\frac{N^d}{|S|}.
\end{align*}
Let $V''_R= k\big(V_R'\cup \{0\}\big)$. If $N$ is chosen such that $\alpha V_R''+B_{\infty}(0,\pi/256d) \subseteq [0,\pi N]^d$, then it follows that
\begin{align*}
\mu_d\left(\alpha V_R''+B_{\infty}\left(0,\frac {\pi} {256d}\right)\right) & \le \mu_{d}\left(\left\{x\in[0,\pi N]^d~\Bigg|~\left(\sum_{s\in S}\cos\big(2\pi\langle s,x\rangle\big)\right)^2 \ge \left(\frac{|S|}{6}\right)^2\right\}\right) 
\\ & \le 36\pi^2\frac{N^d}{|S|}.
\end{align*}
Now, let $N$ be large enough so that Proposition~\ref{prop subset} holds. The result now follows from homogeneity of $\mu_d$ and Proposition~\ref{prop subset}.
\end{proof}

\begin{remark}
Since $k = \sqrt{\frac{n'}{64\pi^2m}}$, using the estimate on $m$ from Proposition~\ref{prop subset} it follows that
\begin{align*}
\sqrt{\frac{n'}{640\pi^2\sqrt{d\log\left(\kappa(K)n^A\right)}}} \le k \le \sqrt{n'}.
\end{align*}
\end{remark}

The remaining main tools in the proof of Theorem \ref{thm gap} the are the following.

\begin{thm}[\cite{NV11}]\label{thm inverse}
Assume that $X$ is a discrete subset of a torsion free group. Assume that there exists an integer $k$ such that $|kX| \le k^{\gamma}|X|$ for some positive number $\gamma$. Then there exists a proper GAP $Q$ with rank $\rank(Q)\le C\gamma$ and cardinality $|Q| \le C(\gamma)\, k^{-\rank(Q)}|kX|$, such that $X\subseteq Q$.
\end{thm}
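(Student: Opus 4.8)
The statement is a $k$-fold version of Freiman's theorem, so the plan is to reduce it to the classical structure theory of sets with small sumset. Translating, we may assume $0\in X$, so that $X\subseteq 2X\subseteq\cdots\subseteq kX$ and $|X|\le|jX|\le|kX|\le k^{\gamma}|X|$ for all $1\le j\le k$. I would treat two regimes separately. If $k$ is bounded in terms of $\gamma$, then already $|X+X|=|2X|\le|kX|\le k^{\gamma}|X|$, so $X$ has doubling bounded purely by $\gamma$; the Green--Ruzsa form of Freiman's theorem (or Ruzsa's modelling lemma followed by the classical $\Z$-version) then yields a proper GAP $Q\supseteq X$ with $\rank(Q)=O_{\gamma}(1)$ and $|Q|=O_{\gamma}(|X|)$, and the asserted bounds follow on comparing with $|kX|\ge|X|$.

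The substantive regime is $k$ large, and here the plan is to pigeonhole on the increasing sequence $\log|jX|$: since $\sum_{k/2\le j<k}\bigl(\log|(j+1)X|-\log|jX|\bigr)=\log|kX|-\log|(k/2)X|\le\gamma\log k$, some $j\in[k/2,k)$ satisfies $|jX+X|\le k^{2\gamma/k}|jX|=(1+o(1))|jX|$; that is, the finite set $jX$ is almost invariant under addition of $X$. Since the ambient group is torsion-free it has no nontrivial finite subgroup, so Kneser's theorem forces the stabiliser of $jX+X$ to be trivial and hence $|jX+X|\ge|jX|+|X|-1$. One is then in the setting of the structure theory for sets with small iterated sumset, and feeding in $|jX|\le k^{\gamma}|X|$ to bound the relevant parameters produces a proper GAP $Q\supseteq X$ of rank $O(\gamma)$. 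The economical route here is to quote the ``Freiman theorem for a general $k$'' of Tao--Vu, of which the present statement is essentially a reformulation, rather than to redevelop this machinery.

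To get the sharp size normalisation I would compare $Q$ with $X$ through their $k$-fold sumsets. Upward: Ruzsa's covering lemma applied to $|jX+X|\le 2|jX|$, together with the Pl\"unnecke--Ruzsa inequalities (note $|2kX|\le(2k)^{\gamma}|X|\le 2^{\gamma}|kX|$, so $kX$ has bounded doubling), covers $Q$ by $O_{\gamma}(1)$ translates of a bounded-fold sumset of $X$ inside a dilate of $kX$, giving $|kQ|\le C(\gamma)|kX|$. Downward: after rectifying $Q$---replacing it by a Freiman-isomorphic proper GAP whose side lengths are small compared with $k$---the identity $kQ=\{\sum_i x_ig_i:|x_i|\le kL_i\}$ (with $g_i,L_i$ the generators and side lengths of $Q$) gives $|kQ|\ge c(\gamma)\,k^{\rank(Q)}|Q|$. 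Combining yields $|Q|\le C(\gamma)\,k^{-\rank(Q)}|kX|$; the elementary bound $|kX|\ge\binom{k+d'}{d'}$ for a set of affine dimension $d'$ is what makes such a matching lower bound plausible in the first place.

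I expect the middle step to be the main obstacle. Any argument that merely converts the hypothesis into a doubling bound for $X$ loses, since that bound has size $\sim k^{\gamma}$ and forces the rank to grow with $k$; obtaining a rank that is \emph{linear in $\gamma$ and independent of $k$} requires using the control on the large sumset $kX$ directly rather than on $X+X$ alone. The bookkeeping in the rectification and covering steps needed for the exact $k^{-\rank(Q)}$ normalisation is the second delicate point.
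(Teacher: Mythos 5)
The paper itself offers no proof of this statement: it is imported verbatim from Nguyen and Vu \cite{NV11} (who in turn build on \cite{SV06} and the Tao--Vu Freiman-type machinery), so there is no internal argument to compare yours against --- the paper cites, while you attempt a sketch.

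Judged on its own terms, your sketch has a genuine gap, located exactly where you flag it. The decisive middle step is deferred to the Tao--Vu/Nguyen--Vu ``Freiman theorem for general $k$'', which you yourself describe as ``essentially a reformulation'' of the statement being proved; as a proof this is circular, and the correct observations preceding it (the pigeonhole giving some $j\in[k/2,k)$ with $|jX+X|\le k^{2\gamma/k}|jX|$, and Kneser with trivial stabiliser in a torsion-free group) are inputs to that machinery, not a derivation of the rank bound $C\gamma$. Second, the ``downward'' estimate $|kQ|\ge c(\gamma)\,k^{\rank(Q)}|Q|$ does not follow from properness of $Q$: properness of $Q$ says nothing about the dilated progression with side lengths $kL_j$, and the standard repair --- embedding $Q$ in a $k$-proper GAP of the same rank --- costs a multiplicative factor of order $k^{O(\rank(Q)^2)}$, which wipes out precisely the $k^{-\rank(Q)}$ normalisation that makes the theorem ``optimal''. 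Recovering that factor without such a loss is the real content of \cite{NV11}, obtained via the long-arithmetic-progressions-in-sumsets results of \cite{SV06} (see also \cite{Tao10}) rather than via Freiman plus Ruzsa covering; your ``rectification'' step silently assumes what has to be proved there. (The bounded-$k$ regime is unproblematic, since there $k^{-\rank(Q)}\ge c(\gamma)$ and a Green--Ruzsa bound $|Q|\le C(\gamma)|X|$ suffices.) So as written the proposal is a reasonable roadmap that, at its crux, either quotes the theorem itself or leaves the sharp cardinality bound unestablished.
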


We are now in a position to prove Theorem \ref{thm gap}.

\begin{proof}[Proof of Theorem \ref{thm gap}]
\noindent \emph {Proof of Part~\ref{part 1}.}
Let $D= 512d\alpha $. Also, as before, let $\rho = \rho_R^K(X_V) = \rho_1^K(X_{V_R})$, where $V_R = \{R^{-1}v_1,\dots,R^{-1}v_n\}$. For $v'\in V_R'$, find $z\in \Z^d$ such that $z/Dk$ is the closest vector to $v'$ in the $\ell_{\infty}$ norm. That is, we can choose $z\in \Z^d$ such that
\begin{align}\label{close vec infty}
\left\|v'-\frac z {Dk}\right\|_{\infty} \le \frac 1 {Dk}.
\end{align}
Denote the set of all $z$'s satisfying~\eqref{close vec infty} by $F $. Let $\sum_{j=1}^kv_j'\in k(V_R'\cup \{0\})$. For each $j\le k$, let $z_j$ be the approximation to $v_j'$ as in~\eqref{close vec infty}. Then,
\begin{align*}
\left\|x-\sum_{j=1}^kv_j'\right\|_{\infty} \le \left\|x-\sum_{j=1}^k\frac {z_j} {Dk}\right\|_{\infty}+\sum_{j=1}^k\left\|x-\frac {z_j} {Dk}\right\|_{\infty} \le \left\|x-\sum_{j=1}^k\frac {z_j} {Dk}\right\|_{\infty}+\frac 1 D,
\end{align*}
which implies that
\begin{align*}
k\left(\frac 1 {Dk}F \right)+B_{\infty}\left(0,\frac 1 D\right) \subseteq k\big(V_R'\cup \{0\}\big)+B_{\infty}\left(0,\frac 2 D\right).
\end{align*}
Recall that here $\frac 1 {Dk}F  = \{x/(Dk)~|~x\in F \}$. Thus, by the choice of $D$ and Lemma \ref{lem dual}, we have
\begin{align*}
\mu_d\left(k\left(\frac 1 {Dk}F \right)+B_{\infty}\left(0,\frac 1 {D}\right)\right)  \le 36\pi^2\left(\frac{2\sqrt d\kappa(K)}{\alpha }\right)^d\rho^{-1}.
\end{align*}
Therefore, by homogeneity,
\begin{align*}
\mu_d\big(kF +B_{\infty}(0,k)\big) \le 36\pi^2\left(\frac{2\sqrt d Dk~\kappa(K)}{\alpha }\right)^d\rho^{-1} \le \big(C \kappa(K)d^{3/2}\big)^d k^d\rho^{-1},
\end{align*}
which implies that there exists an absolute constant $C$ such that
\begin{align}\label{size k sum}
\left|k\left(F +\{-1,1\}^d\right)\right| \le \big(C\kappa(K)d^{3/2}\big)^d k^d\rho^{-1}.
\end{align}
Using the choice $k = \sqrt{\frac{n'}{64\pi^2 m}}$, the fact that $n'\in [n^{\eps},n]$ and the fact that $m \le 10\sqrt{d\log \left(n^A\kappa(K)\right)}$, it follows that for sufficiently large $n$,
\begin{align*}
\frac 1 {8\pi}\left(\frac{n^{\eps}}{Ad\log(\kappa(K))}\right)^{1/4} \le \sqrt{\frac{n^{\eps}}{64\pi^2 m}} \le k \le \sqrt{n'} \le \sqrt n.
\end{align*}
Hence, we have
\begin{align*}
\rho^{-1} \le n^A \le \big(Ad\log(\kappa(K))\big)^{\frac{A}{\eps}}\big(8\pi k\big)^{\frac{4A}{\eps}}.
\end{align*}
Plugging this into \eqref{size k sum},
\begin{align*}
\left|k\left(F +\{-1,1\}^d\right)\right| & \le \big(C \kappa(K)d^{3/2}\big)^{d}(8\pi)^{\frac{4A}{\eps}}\big(Ad\log(\kappa(K))\big)^{\frac{A}{\eps}}k^{d+\frac{4A}{\eps}}
\\ & = k^{d+\frac{4A}{\eps}+\frac{\log (C \kappa(K)) + \frac{4A}{\eps}\log(8\pi) + \frac{A}{\eps}\log(Ad\log(\kappa(K))))}{\log k}}.
\end{align*}
Assuming that $d$, $A$, $\eps$ and $\kappa(K)$ are given constant and $k\to \infty$, it follows that
\begin{align*}
d+\frac{4A}{\eps}+\frac{\log(C \kappa(K)) + \frac{4A}{\eps}\log(8\pi) + \frac{A}{\eps}\log(Ad\log(\kappa(K))))}{\log k} \le d+\frac{8A}{\eps}.
\end{align*}
Using the trivial bound $\big|F +\{-1,1\}^d\big| \ge 1$, and choosing $\gamma = d+\frac{8A}{\eps}$, gives
\begin{align*}
\left|k\left(F +\{-1,1\}^d\right)\right| \le k^{\gamma} \left|F +\{-1,1\}^d\right|.
\end{align*}
Now use Theorem \ref{thm inverse} with the set $F +\{-1,1\}^d$ to deduce that there exists a proper GAP 
\[Q' = \Big\{\sum_{j=1}^{r} x_jg_j ~\Big|~x_j\in \Z,~ |x_j| \le L_j\Big\},\]
which contains $F +\{-1,1\}^d$, and has rank
\begin{align*}
\rank(Q') \le  C\left(d+\frac{A}{\eps}\right),
\end{align*}
and cardinality
\begin{eqnarray*}
|Q'|  &\le  & C(\gamma)k^{-\rank(Q')}\left|k\left(F +\{-1,1\}^d\right)\right| 
\\& \stackrel{\eqref{size k sum}}{\le} & C(A, d, \eps)\big(C \kappa(K)d^{3/2}\big)^d k^{d-\rank(Q')}\rho^{-1}
\\&  \le & C\left(A, d, \eps\right)\big(C \kappa(K)d^{3/2}\big)^d (n')^{\frac{d-\rank(Q')}{2}}\rho^{-1}.
\end{eqnarray*}
Define
\begin{align*}
Q = \frac R{Dk}Q' = \left\{\frac {R}{Dk}\, q'~\Bigg|~ q'\in Q'\right\}.
\end{align*}
Then $Q$ has the same rank and cardinality of $Q'$. This completes the proof of Part~\ref{part 1}.

\noindent \emph{Proof of Part~\ref{part 2}.}
By \eqref{close vec infty} and the fact that $\frac 1 {\omega_{\infty}(K)}B_{\infty}^d\subseteq K$, we have for every $v'\in V_R'$,
\begin{align*}
\left\|v'-\frac z{Dk}\right\|_K \le \frac{\omega_{\infty}(K)}{Dk}.
\end{align*}
Since $V_R' \subseteq V_R = \{R^{-1}v_1,\dots,R^{-1}v_n\}$ and since $|V_R'| \ge n-n'$ it follows that for at least $n-n'$ elements of $V$, there exists $q\in Q$ with 
\[\|v-q\|_K \le \frac{\omega_{\infty}(K)R}{Dk},\]
which proves Part~\ref{part 2}.

\noindent \emph{Proof of Part~\ref{part 3}.}
Follows from the fact that $Q = \frac R{Dk}Q'$.

\noindent \emph{Proof of Part~\ref{part 4}.}
By \eqref{close vec infty} it follows that
\begin{align}\label{close vec}
\left\|v'-\frac z {Dk}\right\|_{K} \le \frac{\omega_{\infty}(K)}{Dk}.
\end{align}
Also, if $\|v'\|_{\infty} \le \frac 1 {Dk}$, can choose $z=0$ as the approximation in $\Z^d$. This means that whenever $\|v'\|_K \le \frac{\omega_{\infty}(K)}{Dk}$, we can choose $z=0$ as the approximation. Otherwise, if $\|v'\|_K \ge \frac{\omega_{\infty}(K)}{Dk}$, we have
\begin{align*}
\|z\|_{K} \stackrel{\eqref{close vec}}{\le} C_K\left(\omega_{\infty}(K)+Dk\|v'\|_K\right) \le 2C_KDk\|v'\|_K.
\end{align*}
Since $v'\in V_R' \subseteq V_R$, it follows that $\max_{z\in F }\|z\|_K \le \frac{2Dk}{R}\max_{v\in V}\|v\|_K$. Now by the results of~\cite{NV11, SV06, Tao10}, it follows that there exist $C=C(A,d,\eps)$ such that
\[kQ \subseteq C(A,d,\eps)\, \Big[k\big(F +\{-1,1\}^d\big)\Big].\] 
In particular, for every $1\le j \le r$, we have
\begin{align}\label{bound on generator}
\|g_j\|_K \le C(A,d,\eps)\, C_K^{k+1}\left(\max_{z\in F }\|z\|_K +\max_{z\in \{-1,1\}^d}\|z\|_K\right),
\end{align}
where we use the fact that in a quasi-normed space, we have
\[\left\|\sum_{j=1}^kv_j\right\|_K \le C_K^k\sum_{j=1}^k\|v_j\|_K.\]
Now,~\eqref{bound on generator} implies,
\begin{align*}
\|g_j\|_K \le C(A,d,\eps)\,C_K^{k+1}\left(\frac{Dk}{R}\max_{v\in V}\|v\|_K+\omega_{\infty}(K)\right),
\end{align*}
which completes the proof of Part~\ref{part 4} and of Theorem~\ref{thm gap}.
\end{proof}

\end{document}